\theoremstyle{plain}
\newtheorem{theorem}{Theorem}[section]
\newtheorem{proposition}[theorem]{Proposition}
\newtheorem{lemma}[theorem]{Lemma}
\newtheorem{fact}[theorem]{Fact}
\theoremstyle{definition}
\newtheorem{example}[theorem]{Example}
\newcommand{\sm}{\setminus}
\newcommand{\eps}{\varepsilon}
\title{Matchings in multipartite hypergraphs}
\author{Candida Bowtell}
\address{School of Mathematics, University of Birmingham, Birmingham B15 2TT, United Kingdom}
\email{c.bowtell@bham.ac.uk}
\thanks{CB gratefully acknowledges support from ERC Advanced Grant 883810, ERC Starting Grant 947978 and Leverhulme Trust Early Career Fellowship ECF--2023--393; RM gratefully acknowledges support from EPSRC Standard Grant EP/R034389/1.}
\author{Richard Mycroft}
\email{r.mycroft@bham.ac.uk}
\begin{document}

\date{}

\begin{abstract} 
A folklore result on matchings in graphs states that if $G$ is a bipartite graph whose vertex classes $A$ and $B$ each have size $n$, with $\deg(u) \geq a$ for every $u \in A$ and $\deg(v) \geq b$ for every $v \in B$, then $G$ admits a matching of size $\min\{n, a+b\}$. In this paper we establish the analogous result for large $k$-partite $k$-uniform hypergraphs, answering a question of Han, Zang and Zhao, who previously demonstrated that this result holds under the additional condition that the minimum degrees into at least two of the vertex classes are large. A key part of our proof is a study of rainbow matchings under a combination of degree and multiplicity conditions, which may be of independent interest.
\end{abstract}

\maketitle

\section{Introduction}

 An old folklore result, whose proof often appears as a straightforward exercise in elementary graph theory courses, states 
the following.
\begin{fact} \label{thm_for_graphs}
Let~$G$ be a bipartite graph whose vertex classes~$A$ and~$B$ contain~$n$ vertices each. If $\deg(u) \geq a$ for every vertex $u \in A$, and $\deg(v) \geq b$ for every vertex $v \in B$, then~$G$ admits a matching (i.e. a set of pairwise-disjoint edges) of size $\min\{n, a+b\}$.
\end{fact}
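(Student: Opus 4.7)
The plan is to apply K\"onig's theorem, which asserts that in any bipartite graph the matching number equals the vertex cover number. Let $M$ be a maximum matching of $G$ and let $C$ be a minimum vertex cover, so $|M| = |C|$; writing $C_A = C \cap A$ and $C_B = C \cap B$, it suffices to show that $|C_A| + |C_B| \geq \min\{n, a+b\}$.

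I would then split into two cases according to whether $C$ covers an entire side of the bipartition. If $C_A = A$ or $C_B = B$ then $|C| \geq n$ immediately. Otherwise one can pick $u \in A \setminus C_A$ and $v \in B \setminus C_B$; since every edge of $G$ has an endpoint in $C$ but $u \notin C$, the entire neighbourhood $N(u)$ must lie in $C_B$, so the minimum degree hypothesis yields $|C_B| \geq \deg(u) \geq a$. Symmetrically $|C_A| \geq \deg(v) \geq b$, and hence $|C| \geq a + b$.

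Combining the two cases gives $|M| \geq \min\{n, a+b\}$, as required. There is essentially no substantive obstacle here, which is consistent with the paper's remark that the result is a standard exercise; the only mild care needed is the case split to ensure that the vertices $u, v$ used to lower-bound $|C_A|, |C_B|$ actually exist. A self-contained alternative that avoids invoking K\"onig's theorem is to start from a maximum matching $M$ and assume for contradiction that $|M| < \min\{n, a+b\}$: then there exist unmatched vertices $u \in A$ and $v \in B$, and an alternating-BFS from $u$ must fail to reach $v$ by maximality, at which point the structure of the matched pairs encountered, together with the degree bounds at $u$ and $v$, forces $|M| \geq a+b$, a contradiction.
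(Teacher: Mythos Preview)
The paper does not actually supply a proof of this fact: it is stated as a folklore result ``whose proof often appears as a straightforward exercise,'' and no argument is given. Your K\"onig's theorem proof is correct and is one of the standard ways to dispatch it; the case split is exactly what is needed to ensure the witnesses $u$ and $v$ exist. The elementary alternative you sketch (that an edge of a maximum matching meeting both $N(u)$ and $N(v)$ would yield an augmenting path $u\,y\,x\,v$, so the $a$ matched edges meeting $N(u)$ and the $b$ matched edges meeting $N(v)$ are disjoint) is also valid and avoids appealing to K\"onig, though your description of it is a little vague as written.
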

This bound on the size of the matching is best possible for all~$a, b$ and~$n$, in the sense that it is not possible to have a matching of size larger than~$n$, and moreover there exist graphs satisfying the degree conditions which do not contain a matching of size larger than~$a+b$.

The central result of this paper is to prove the analogous statement for large multipartite hypergraphs. To explain this precisely, we make the following definitions, all of which are standard in extremal graph theory. A \emph{$k$-uniform hypergraph}, or simply \emph{$k$-graph}, consists of a vertex set~$V$ and an edge set~$E$, where each edge is a set of~$k$ vertices. So, in particular, a $2$-graph is a simple graph. Exactly as for graphs, a \emph{matching} in a $k$-graph~$H$ is a set of pairwise-disjoint edges, and we write~$\nu(H)$ for the \emph{matching number} of~$H$, that is, the size of a largest matching in~$H$. A matching in~$H$ is \emph{perfect} if it covers every vertex of~$H$. We say that a $k$-graph is \emph{$k$-partite} if its vertex set~$V$ admits a partition $V = V_1 \cup \dots \cup V_k$ for which every edge $e \in E$ has precisely one vertex in each set~$V_i$; we refer to the sets~$V_i$ as \emph{vertex classes}. Observe that a $2$-partite $2$-graph is exactly a bipartite graph. Given a $k$-partite $k$-graph~$H$ with vertex classes $V_1, \ldots, V_k$, we say that a set $S \subseteq V(H)$ is \emph{crossing} if it has at most one vertex in each vertex class, and that~$S$ \emph{avoids}~$V_i$ if $S \cap V_i = \emptyset$. For a crossing $(k-1)$-tuple~$S$ of vertices of~$V(H)$, we define the \emph{degree}~$\deg(S)$ of~$S$ to be the number of edges $e \in E(H)$ with $S \subseteq e$ (in other words, the number of ways to add a vertex to~$S$ to form an edge of~$H$; note that since~$H$ is $k$-partite this vertex must come from the vertex class which~$S$ avoids). Finally, for each $i \in [k]$ we define the \emph{minimum multipartite codegree into~$V_i$}, denoted~$\delta_{[k] \sm \{i\}}(H)$, to be the minimum of~$\deg(S)$ over all crossing $(k-1)$-tuples~$S$ of vertices of~$H$ which avoid~$V_i$. We can now give the statement of our main result.

\begin{theorem} \label{thm_main}
For each $k \geq 3$, there exists~$n_0$ for which the following statement holds. Let~$H$ be a $k$-partite $k$-graph with vertex classes $V_1, \dots, V_k$ each of size $n \geq n_0$ in which $\delta_{[k]\setminus \{i\}}(H) \geq a_i$ for each $i \in [k]$. Then 
$$\nu(H)\geq \min\{n-1, \sum_{i=1}^k a_i\}.$$
\end{theorem}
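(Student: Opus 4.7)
My plan is to induct on $k$, with Fact~\ref{thm_for_graphs} as the base case $k=2$. For the inductive step, fix a $k$-partite $k$-graph $H$ satisfying the hypotheses, and for each $v \in V_k$ let $L_v$ be the \emph{link} of $v$: the $(k-1)$-partite $(k-1)$-graph on $V_1, \ldots, V_{k-1}$ whose edges are the crossing $(k-1)$-sets $S$ with $S \cup \{v\} \in E(H)$. Because $H$ is $k$-partite, a matching in $H$ of size $t$ is equivalent to a \emph{rainbow matching} of size $t$ in the family $(L_v)_{v \in V_k}$, i.e.\ a choice of $t$ distinct vertices $v_1, \ldots, v_t \in V_k$ together with pairwise-disjoint edges $e_j \in L_{v_j}$. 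For each $i \in [k-1]$ the hypothesis $\delta_{[k] \setminus \{i\}}(H) \geq a_i$ descends unchanged to $\delta_{[k-1] \setminus \{i\}}(L_v) \geq a_i$ for every $v \in V_k$, while $\delta_{[k-1]}(H) \geq a_k$ asserts that each crossing $(k-1)$-tuple $S \subseteq V_1 \cup \cdots \cup V_{k-1}$ lies in at least $a_k$ of the links $L_v$ — a \emph{multiplicity} lower bound on the multiset $\bigsqcup_v L_v$. The theorem therefore reduces to a rainbow matching theorem for families of $(k-1)$-partite $(k-1)$-graphs under combined codegree and multiplicity hypotheses, which is exactly the study flagged in the abstract.

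\textbf{Proving the rainbow matching lemma.} My approach would split according to how $\sum_{i=1}^{k-1} a_i$ compares to the target $T := \min\{n-1, \sum_{i=1}^k a_i\}$, and also according to whether $a_k$ is large or small (with freedom to permute the roles of the classes so as to put whichever $a_i$ is most useful in the role of $a_k$). When $\sum_{i=1}^{k-1} a_i \geq T$, the induction hypothesis already yields a matching of size $T$ inside any single link $L_v$; the remaining task is to reassign its edges to distinct colours $v \in V_k$, which I would attempt as a Hall-type bipartite matching between edges and colours, where the multiplicity $a_k$ provides enough alternative colours per edge. In the complementary regime one must genuinely interleave edges from different links, and I would analyse a maximum rainbow matching $M$ assumed to be below the target: picking unmatched vertices $u_i \in V_i$ in each class, the link codegree conditions supply many near-extending edges, while the multiplicity $a_k$ supplies the colour freedom needed to close up an augmenting substructure that replaces a bounded number of edges of $M$ by a larger configuration. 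A plausible implementation is a semi-random greedy construction of the bulk of the matching, followed by an absorbing-type correction for the last few edges.

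\textbf{Main obstacle.} The crucial difficulty is the augmentation step: unlike for bipartite graphs, hypergraphs admit no clean analogue of the K\"onig / augmenting-path machinery, so single-edge swaps typically violate either the disjointness or the rainbow property, and one must coordinate simultaneous swaps across several edges and colours. The tight loss from $n$ to $n-1$ is genuine — witnessed by an $H$ in which some $v^* \in V_1$ lies in no edge while all other crossing $k$-tuples are edges, so $\delta_{[k] \setminus \{1\}}(H) = n-1$ and $\nu(H) = n-1$ — so the argument must permit exactly one unmatched vertex per class with no further slack. I expect the hardest subcase is precisely where $a_k$ is very small (so the multiplicity hypothesis is nearly void) and at most one of the remaining $a_i$ is large, since this is exactly the range left open by Han, Zang and Zhao (whose hypothesis that two classes have large codegree into them translates, via the above reformulation, to simultaneously large within-link codegree and multiplicity). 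Handling this subcase seems to demand exploiting the rainbow structure globally rather than treating each link independently.
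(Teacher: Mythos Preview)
Your reformulation via link graphs into a rainbow matching problem is exactly the reformulation the paper uses for its new contribution, and your diagnosis that the hardest regime is ``only one $a_i$ large'' is precisely the case the paper isolates and treats. However, the paper does \emph{not} proceed by induction on $k$, and it does not attempt a self-contained argument: it quotes Han--Zang--Zhao (Theorem~\ref{thm_HZZ}) as a black box whenever the second-largest $a_i$ exceeds $\eps n$, and only proves the complementary extremal case $\sum_{i\ge 2} a_i \le n/1600k^4$ from scratch (Theorem~\ref{thm_extremal_n-1}). In that extremal case the paper takes links of vertices in the class with the \emph{largest} $a_i$, so that the family of links has tiny codegree sum $q=\sum_{i\ge 2}a_i$ but very high multiplicity $m\approx a_1$; the rainbow matching lemma (Lemma~\ref{lem:perfect_rainbow_matching}) is then proved directly, via a stability dichotomy (Lemma~\ref{lem:rainbow_or_domset}: either a perfect rainbow matching exists, or almost every link has a dominating set of size $\approx q$) followed by an auxiliary bipartite matching between colours and high-degree vertices. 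None of the steps invoke the theorem at uniformity $k-1$.

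Your inductive framework has a concrete gap. In your first branch, $\sum_{i<k} a_i \ge T$ with $T=\sum_{i\in[k]} a_i$ forces $a_k=0$, so the multiplicity hypothesis is vacuous and the proposed Hall-type reassignment of a fixed matching to colours has no degree lower bound to appeal to; a single edge of your matching may lie in only one link. More structurally, the inductive hypothesis controls the matching number of each \emph{individual} link, and this says nothing about rainbow coordination across links --- which is the entire difficulty. The paper sidesteps this by never invoking a lower-uniformity statement: its rainbow lemma is a standalone result about families of $(k-1)$-graphs under combined degree and multiplicity conditions, proved by the dominating-set stability route rather than by augmentation or absorption. Your ``semi-random greedy plus absorbing'' suggestion is essentially what Han--Zang--Zhao already carried out for the two-large-$a_i$ regime; the paper does not redo that work.
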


One might initially wonder whether we could hope for the stronger conclusion that 
$\nu(H) \geq \min\{n, \sum_{i \in [k]} a_i\}$ in Theorem~\ref{thm_main}. However, Theorem~\ref{thm_main} was shown to be best possible in this sense by the following extremal construction presented by Han, Zang and Zhao~\cite{main}.

\begin{example}[Divisibility barrier] \label{ex_divis} 
Take sets $V_i:=A_i \dot{\cup} B_i$ with $n/2-1 \leq |A_i|\leq n/2+1$ for each $i \in [k]$, ensuring that $\sum_{i \in [k]} |A_i|$ is odd. Let~$H$ be the $k$-partite $k$-graph with vertex classes $V_1, \dots, V_k$ whose edges are all crossing $k$-tuples containing an even number of vertices in $\bigcup_{i \in [k]} A_i$. We then have $\delta_{[k]\setminus \{i\}}(H) \geq a_i := n/2-1$ for each~$i$, so $\sum_{i \in [k]} a_i \geq k(n/2-1)$. However, every matching~$M$ in~$H$ contains an even number of vertices in $\bigcup_{i \in [k]} A_i$, so is not perfect since $\sum_{i \in [k]} |A_i|$ is odd. 
\end{example}

The same authors also presented the following extremal construction showing that the conclusion of Theorem~\ref{thm_main} cannot be strengthened in the case when $\sum_{i \in [k]} a_i \leq n-1$.

\begin{example}[Space barrier]\label{ex_space}
Take sets $A_i \subseteq V_i$ with $|A_i|=a_i$ and $|V_i| = n$ for each $i \in [k]$, and let~$H$ be the $k$-partite $k$-graph with vertex classes $V_1, \dots, V_k$ whose edges are all crossing $k$-tuples which intersect $\bigcup_{i \in [k]} A_i$. We then have $\delta_{[k]\setminus \{i\}}(H) \geq a_i$ for each $i \in [k]$, but every matching~$M$ in~$H$ then has $|M| \leq |\bigcup_{i \in [k]} A_i| = \sum_{i \in [k]} a_i$ since every edge of~$M$ contains a vertex from $\bigcup_{i \in [k]} A_i$.
\end{example}

Previous work on this question established many cases of Theorem~\ref{thm_main}. Indeed, Han, Zang and Zhao~\cite{main} observed that Theorem~\ref{thm_main} may be proved in the case when $\sum_{i \in [k]} a_i \leq n-k+2$ by adapting a short elegant argument of K\"uhn and Osthus~\cite{kuhn-ost} which gave a matching of size~$n-k+2$ in the case in which $a_i = n/k$ for each $i \in[k]$ by showing that any smaller matching can be enlarged by removing an edge and adding two new edges. 

\begin{fact} \label{prop_fact}
Let~$H$ be a $k$-partite $k$-graph with vertex classes $V_1, \dots, V_k$ each of size~$n$. If $\delta_{[k]\setminus \{i\}} (H) \geq a_i$ for every $i \in [k]$, then
$$\nu(H) \geq \min\{n-k+2, \sum_{i=1}^k a_i\}.$$
\end{fact}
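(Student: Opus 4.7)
\textbf{Proof plan for Fact~\ref{prop_fact}.}
Following the hint in the preceding paragraph, the plan is an augmenting-swap argument. Suppose for contradiction that $M$ is a maximum matching in $H$ with $|M|=m<\min\{n-k+2,\,\sum_{i=1}^{k} a_i\}$; I aim to produce a matching of size $m+1$ by deleting one edge of $M$ and inserting two new edges.

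The first step is to set up $k$ pairwise disjoint crossing $(k-1)$-tuples of uncovered vertices, one avoiding each class. For each $j\in[k]$ the uncovered set $U_j:=V_j\setminus V(M)$ has size $n-m\geq k-1$, so for every ordered pair $(i,j)$ with $i\neq j$ I can pick $v_{i,j}\in U_j$ in such a way that the $k-1$ vertices $\{v_{i,j}:i\in[k]\setminus\{j\}\}$ are pairwise distinct for each fixed $j$. Setting $S_i:=\{v_{i,j}:j\in[k]\setminus\{i\}\}$ then gives crossing $(k-1)$-tuples of uncovered vertices, $S_i$ avoiding $V_i$, and the sets $S_1,\dots,S_k$ are pairwise disjoint, since they meet $V_\ell$ in the distinct vertices $v_{i,\ell}$ (for $i\neq\ell$).

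Next I locate the edge of $M$ to swap out. By the codegree hypothesis $\delta_{[k]\setminus\{i\}}(H)\geq a_i$, for each $i\in[k]$ there are at least $a_i$ vertices $x\in V_i$ with $S_i\cup\{x\}\in E(H)$, and by maximality of $M$ every such $x$ lies in $V(M)\cap V_i$. Call each such $x$ an \emph{$i$-marker}; since $|e\cap V_i|=1$ for every $e\in M$, each edge carries at most one $i$-marker. The total number of markers is at least $\sum_{i=1}^{k} a_i>m=|M|$, so pigeonhole produces an edge $e\in M$ carrying markers from two distinct classes $i\neq j$, at vertices $e_i:=e\cap V_i$ and $e_j:=e\cap V_j$.

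Finally, set $f_i:=S_i\cup\{e_i\}$ and $f_j:=S_j\cup\{e_j\}$; both are edges of $H$ by definition of the markers. They are disjoint from $M\setminus\{e\}$ because each $v_{\cdot,\cdot}$ is uncovered and $e_i,e_j\in e$, and they are disjoint from one another: for $\ell\notin\{i,j\}$ they meet $V_\ell$ in the distinct vertices $v_{i,\ell}$ and $v_{j,\ell}$, while in $V_i$ one has $e_i\in V(M)$ against $v_{j,i}\in U_i$, and symmetrically in $V_j$. Hence $(M\setminus\{e\})\cup\{f_i,f_j\}$ is a matching of size $m+1$, contradicting maximality. The only delicate point in the plan is arranging $S_1,\dots,S_k$ to be pairwise disjoint so that $f_i$ and $f_j$ automatically come out disjoint; this is precisely what forces each $|U_j|\geq k-1$ and produces the $n-k+2$ appearing in the statement.
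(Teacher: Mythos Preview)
Your proof is correct and matches the approach the paper sketches in the paragraph preceding Fact~\ref{prop_fact}: the paper does not spell out a proof but attributes the result to the K\"uhn--Osthus argument (as adapted by Han, Zang and Zhao), namely that any matching smaller than the claimed bound can be enlarged by removing one edge and inserting two. Your execution---choosing $k$ pairwise-disjoint uncovered $(k-1)$-tuples $S_1,\dots,S_k$, using the codegree condition to force all neighbours into $V(M)$, and then pigeonholing $\sum a_i>m$ markers into $m$ edges to find an edge carrying two---is exactly this argument, and the same mechanism reappears in the paper's proof of Lemma~\ref{lem:almostperfectrainbowmatching}.
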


In the same paper, Han, Zang and Zhao used a sophisticated absorbing argument to prove Theorem~\ref{thm_main} in all cases where at least two of the minimum multipartite codegrees are large.

\begin{theorem}\cite[Theorem 1.1]{main} \label{thm_HZZ}
For each~$k \geq 3$ and all~$\eps > 0$ there exists $n_0 \in \mathbb{N}$ for which the following statement holds. Let~$H$ be a $k$-partite $k$-graph with vertex classes $V_1, \dots, V_k$ each of size $n \geq n_0$ in which $\delta_{[k]\setminus \{i\}}(H) \geq a_i$ for each $i \in [k]$. If $a_1 \geq a_2 \geq \ldots \geq a_k$ and $a_2 > \eps n$, then $$\nu(H) \geq \min\{n-1, \sum_{i=1}^{k} a_i\}.$$
\end{theorem}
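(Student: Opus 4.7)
The plan is to apply the absorbing method, exploiting the hypothesis $a_1, a_2 > \varepsilon n$ to construct absorbers for every crossing $k$-tuple. I first reduce to the non-trivial case via Fact \ref{prop_fact}: for $k = 3$ we have $n - k + 2 = n - 1$, so the bound is immediate, and for $k \geq 4$ with $\sum_i a_i \leq n - k + 2$ Fact \ref{prop_fact} gives the conclusion. So assume $k \geq 4$ and $\sum_i a_i > n - k + 2$; the remaining gap between Fact \ref{prop_fact} and the target $\min\{n-1, \sum_i a_i\}$ is at most $k - 3 = O(1)$ edges, but the extremal configurations of Examples \ref{ex_divis} and \ref{ex_space} show this gap is delicate to bridge by local swaps alone.

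For a crossing $k$-tuple $T = (v_1, \ldots, v_k) \in V_1 \times \cdots \times V_k$, define a $T$-absorber to be a matching $\{e_1, e_2\} \subseteq E(H)$ disjoint from $T$ such that $V(T) \cup V(e_1) \cup V(e_2)$ spans a perfect matching of three edges in $H$. A natural template takes $e_1 = (u_1, \ldots, u_k)$, $e_2 = (w_1, \ldots, w_k)$ together with
\[ f_1 = (v_1, u_2, u_3, \ldots, u_k), \quad f_2 = (u_1, v_2, w_3, \ldots, w_k), \quad f_3 = (w_1, w_2, v_3, \ldots, v_k), \]
which form a crossing three-edge matching partitioning $V(T) \cup V(e_1) \cup V(e_2)$. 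The key lemma I would prove is that when $a_1, a_2 > \varepsilon n$, every $T$ admits $\Omega_\varepsilon(n^{2k})$ such absorbers; this is shown by choosing $(u_3, \ldots, u_k)$ and $(w_3, \ldots, w_k)$ freely in $V_3 \times \cdots \times V_k$ and invoking the codegree bounds $\delta_{[k] \setminus \{i\}}(H) \geq a_i > \varepsilon n$ for $i \in \{1, 2\}$ to count valid choices of $u_1, u_2, w_1, w_2$ satisfying all five edge conditions simultaneously.

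Given this absorber bound, a standard probabilistic deletion argument produces a matching $M_{\mathrm{abs}} \subseteq E(H)$ of size $O(\varepsilon n)$ with the following absorbing property: for any balanced leftover set $L \subseteq V(H) \setminus V(M_{\mathrm{abs}})$ with $|L \cap V_i| \leq \eta n$ per class (where $\eta \ll \varepsilon$), $H[V(M_{\mathrm{abs}}) \cup L]$ admits a matching covering all but at most one vertex per class. Applying Fact \ref{prop_fact} inside $H - V(M_{\mathrm{abs}})$ (whose codegree parameters drop by at most $|M_{\mathrm{abs}}|$) yields a near-perfect matching there, and absorbing the remaining leftover produces a matching of size $\geq \min\{n - 1, \sum_i a_i\}$.

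The main obstacle is the absorber count itself. With only two linear codegrees, the template couples $u_1$ through both $e_1$ (fixing $(u_2, \ldots, u_k)$) and $f_2$ (fixing $(v_2, w_3, \ldots, w_k)$), forcing $u_1$ into the intersection of two link sets in $V_1$ of sizes only guaranteed to be $\geq a_1$ out of $n$; this intersection can be empty when $a_1 \leq n/2$, and an analogous difficulty arises for the pair $(w_1, w_2)$ via $e_2$ and $f_3$. Addressing this requires averaging over the free choices $(u_3, \ldots, u_k), (w_3, \ldots, w_k)$, possibly enlarging the template to admit more flexibility, or splitting into cases based on whether $a_1, a_2 > n/2$. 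Additional care is needed in the space-barrier regime (Example \ref{ex_space}) where $\sum_i a_i$ is only slightly above $n - k + 2$: the absorbing machinery cannot exceed $\sum_i a_i$ edges there, so the last $O(k)$ edges are instead obtained by a direct local-augmentation argument. The divisibility barrier of Example \ref{ex_divis} is automatically sidestepped since the target $n - 1$ tolerates one uncovered vertex per class.
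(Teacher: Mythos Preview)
This theorem is not proved in the present paper at all: it is quoted verbatim from Han, Zang and Zhao~\cite{main}, and the only information the paper supplies about their argument is the phrase ``sophisticated absorbing argument''. So there is no in-paper proof to compare against; the relevant benchmark is the original HZZ paper.

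Your overall strategy---absorbing, preceded by the reduction via Fact~\ref{prop_fact}---matches what HZZ do, and your observation that the case $k=3$ is already handled by Fact~\ref{prop_fact} (since $n-k+2=n-1$) is correct and worth keeping. However, what you have written is a plan rather than a proof, and the gap you flag yourself is genuine and not closed by the remedies you suggest. With your template, $u_1$ must lie in $N_H(u_2,\dots,u_k)\cap N_H(v_2,w_3,\dots,w_k)$, two subsets of $V_1$ each only guaranteed to have size $a_1$; when $a_1\le n/2$ these can be disjoint. Averaging over the free coordinates $(u_3,\dots,u_k)$ or $(w_3,\dots,w_k)$ does not help, because for fixed $u_1,v_2$ you have no lower bound on the number of $(w_3,\dots,w_k)$ with $(u_1,v_2,w_3,\dots,w_k)\in E(H)$ unless some $a_i$ with $i\ge 3$ is linear---and the hypothesis permits $a_3=\cdots=a_k=0$. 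The same obstruction recurs symmetrically for $(w_1,w_2)$. Enlarging the template to a longer chain only multiplies the number of such intersections you must control.

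What HZZ actually do is substantially more involved than a single absorber template: their argument combines a stability analysis (splitting according to whether $H$ is close to the space-barrier or divisibility-barrier constructions) with a lattice-based absorbing scheme, and the near-extremal cases are handled separately. Your final paragraph correctly anticipates that a case split is needed, but the proposal as it stands does not supply the structural information required to carry it out. If you want to reconstruct a proof along these lines you should expect to prove a stability lemma first and then design absorbers adapted to each structural case, rather than hoping a single universal template will work.
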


It remained to establish Theorem~\ref{thm_main} in the cases where all but one of the minimum multipartite codegrees are small, and Han, Zang and Zhao remarked that they were not sure whether the statement extends to the remaining cases. In this paper we show that this is indeed the case, and in fact we obtain a slightly stronger bound in the remaining cases, as stated in the following theorem.

\begin{theorem} \label{thm_extremal_n-1}
For each $k \geq 3$ there exists $n_0 \in \mathbb{N}$ for which the following statement holds. Let~$H$ be a $k$-partite $k$-graph with vertex classes $V_1, \dots, V_k$ each of size $n \geq n_0$ in which $\delta_{[k]\setminus \{i\}}(H) \geq a_i$ for each $i \in [k]$. If $a_1 \geq a_2 \geq \ldots \geq a_k$ and $\sum_{i \in [2,k]} a_i \leq \frac{n}{1600k^4}$, then $$\nu(H) \geq \min\{n, \sum_{i=1}^{k} a_i\}.$$
\end{theorem}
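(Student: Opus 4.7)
The first move is to dispose of the non-extremal case. If $\sum_i a_i \leq n-k+2$, Fact~\ref{prop_fact} directly yields a matching of size $\sum_i a_i$, which is exactly our target. So we may assume $\sum_i a_i \geq n-k+3$; combined with the hypothesis $\sum_{i \geq 2} a_i \leq n/(1600k^4)$, this forces $a_1 \geq n - k + 3 - n/(1600k^4)$, so $a_1$ is within an $O(1/k^4)$-fraction of $n$ once $n_0$ is large. The working regime is therefore: one enormous codegree $a_1$ and all others collectively tiny.

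Next, I would recast the problem as a rainbow matching problem on the links into $V_1$. For each $v \in V_1$, let $L_v$ be the $(k-1)$-partite $(k-1)$-graph on $V_2 \cup \cdots \cup V_k$ whose edges are the crossing $(k-1)$-tuples $e$ with $\{v\} \cup e \in E(H)$. The codegree hypothesis $\delta_{[k] \sm \{j\}}(H) \geq a_j$ translates, for each $j \in \{2, \ldots, k\}$ and each $v \in V_1$, to a minimum multipartite codegree of $L_v$ into $V_j$ of at least $a_j$. A matching in $H$ of size $t$ corresponds exactly to a rainbow matching of size $t$ in the family $\{L_v\}_{v \in V_1}$: a choice of distinct colours $v_1, \ldots, v_t \in V_1$ together with pairwise disjoint edges $e_i \in L_{v_i}$. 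So the target is a rainbow matching of size $\min\{n, \sum_i a_i\}$. Crucially, every crossing $(k-1)$-tuple on $V_2 \cup \cdots \cup V_k$ that appears in any $L_v$ appears in at least $a_1$ of them (namely, its codegree into $V_1$), so each present edge comes with a huge reservoir of colour choices.

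I would then invoke the rainbow matching machinery developed earlier in the paper---the \emph{key part of our proof} flagged in the abstract---applied to $\{L_v\}_{v \in V_1}$ with degree data $(a_2, \ldots, a_k)$ and multiplicity parameter $a_1$, thereby producing the desired rainbow matching and hence a matching of the target size in $H$. The main obstacle in the plan is the rainbow matching lemma itself. In isolation, Fact~\ref{prop_fact} applied to a single $L_v$ only furnishes a matching of size $\sum_{j \geq 2} a_j$, which is negligible compared with the target $\min\{n, \sum_i a_i\} \approx n$; so the lemma cannot succeed colour-by-colour but must genuinely exploit the multiplicity, effectively letting edges serve many colours at once. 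A natural route is to first take a (near-)perfect matching in the shadow $(k-1)$-graph on $V_2 \cup \cdots \cup V_k$, which is essentially the complete $(k-1)$-partite $(k-1)$-graph since each crossing tuple has codegree at least $a_1 \geq 1$ into $V_1$, and then assign colours via a Hall-type argument: the slack $n - a_1 \leq n/(1600k^4)$ controls Hall deficiencies for medium-sized colour-subsets, while a small absorbing or local-swap ingredient handles the troublesome range near the top end of Hall's condition, where genuine deficiencies could arise.
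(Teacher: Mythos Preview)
Your opening reductions are right, and recasting the problem in terms of link graphs $L_v$ with the degree data $(a_2,\dots,a_k)$ and multiplicity $a_1$ is exactly how the paper proceeds. The gap is in how you propose to deploy the rainbow machinery. You want to apply Lemma~\ref{lem:perfect_rainbow_matching} directly to the full family $\{L_v\}_{v\in V_1}$ with $t=n$, $m=a_1$, and $q=\sum_{j\ge 2}a_j$, hoping to extract a rainbow matching of size $m+q=\sum_i a_i$. But that lemma carries the side condition $t\le(1+\tfrac1{200k})q$; here $t=n\ge 1600k^4 q$, so the hypothesis fails by orders of magnitude. The paper explicitly flags that the lemma is only proved in this narrow regime (and leaves the wider range as an open question), so you cannot simply invoke it on the whole of $V_1$.

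The actual argument uses the rainbow lemma only on a small subfamily. Fix a perfect matching $M$ in the complete $(k-1)$-partite $(k-1)$-graph on $V_2,\dots,V_k$, and use a double-counting (Proposition~\ref{lemma_extremal_count}) to find a set $X\subseteq V_1$ of size roughly $n-q$ such that every $u\in X$ has many edges of $M$ in $L_u$. Set $Y=V_1\setminus X$, so $|Y|$ is only slightly larger than $q$; Proposition~\ref{prop_complete} gives that every crossing $(k-1)$-tuple lies in at least $m:=|Y|-(n-a_1)$ of the link graphs $\{L_u\}_{u\in Y}$. Now the rainbow lemma \emph{does} apply to this subfamily, since $t=|Y|\le(1+\tfrac1{200k})q$, and yields a rainbow matching $M_{\mathrm{rainbow}}$ of size $m+q=|Y|-(n-a_1)+q$ using colours from $Y$. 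One then replaces the edges of $M$ meeting $V(M_{\mathrm{rainbow}})$ to obtain a perfect matching $M^*\supseteq M_{\mathrm{rainbow}}$, and runs Hall on the bipartite graph between $X\cup Z$ (where $Z\subseteq Y$ are the colours used by $M_{\mathrm{rainbow}}$) and $M^*$: large subsets are handled by multiplicity, subsets meeting $X$ by the ``many edges of $M$'' property, and subsets inside $Z$ by $M_{\mathrm{rainbow}}$ itself. Your closing paragraph gestures at the Hall step but misidentifies where the obstruction sits: it is \emph{small} subsets $S\subseteq V_1$ (individual bad vertices whose link misses $M$ entirely) that defeat a naive Hall argument, not large ones, and the rainbow lemma on the small family $Y$ is precisely what repairs this---not an unspecified ``absorbing or local-swap ingredient''.
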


Taking $\eps = 1/1600k^5$, Theorems~\ref{thm_HZZ} and~\ref{thm_extremal_n-1} together prove Theorem~\ref{thm_main}, since by relabelling the vertex classes if necessary we may assume without loss of generality that $a_1 \geq a_2 \geq \ldots \geq a_k$.

\subsection{Wider context}

Determining the size of a largest matching in a given $k$-graph is a key question in extremal combinatorics which has been a central focus of research activity over recent decades. As well as being a natural, fundamental question in its own right, this question arises since matchings provide a general framework for many important problems within combinatorics such as combinatorial designs, Latin squares, tilings and decompositions, as well as in practical applications such as the `Santa Claus' allocation problem~\cite{AFS} and for distributed storage allocation~\cite{AFHRRS}.
In the graph case Edmond's blossom algorithm~\cite{E} gives an efficient method for computing a largest matching, whilst Hall's marriage theorem~\cite{hall} and Tutte's theorem~\cite{tutte} give elegant characterisations of bipartite graphs and arbitrary graphs respectively which contain perfect matchings. By contrast, for $k \geq 3$ the problem of determining the size of a largest matching in a $k$-graph is an NP-hard problem, as is the more restricted question of whether a given 3-partite 3-graph contains a perfect matching; the latter problem is often referred to as \emph{3-dimensional matching} and was one of Karp's original 21 NP-complete problems~\cite{karp}.  Consequently we do not expect to find similar characterisations as for the graph case. Instead, the principal direction of research has been to identify sufficient conditions which guarantee the existence of a matching of size~$s$ in a $k$-graph~$H$, with particular focus on the case of a perfect matching, i.e. when $s = |V(H)|/k$. Since a necessary condition for a perfect matching is that~$H$ has no isolated vertices, the most natural parameters to consider are minimum degree conditions, also known as Dirac-type conditions after Dirac's celebrated theorem~\cite{dirac} that every graph~$G$ on $n \geq 3$ vertices with minimum degree $\delta(G) \geq n/2$ contains a Hamilton cycle (a cycle covering every vertex); for even~$n$, choosing every other edge from such a cycle gives a perfect matching in~$G$. (Another recent research focus has been to find minimum degree conditions under which the perfect matching problem in $k$-graphs is computationally tractable~\cite{han2, HT, KRS, divbarrier, Szy}.)

The strongest and most frequently used minimum degree conditions for $k$-uniform hypergraphs~$H$ are \emph{codegree} conditions, that is, conditions on the degree,~$\deg(S)$, of sets $S \subseteq V(H)$ of size~$k-1$; the usefulness of such conditions is that for such sets~$\deg(S)$ indicates the number of vertices which can be added to~$S$ to form an edge of~$H$. Recall from the opening section that for a $k$-partite~$k$-graph~$H$ with vertex classes $V_1, \dots, V_k$ the minimum multipartite codegree into~$V_i$, denoted~$\delta_{[k]\sm \{i\}}(H)$, is the minimum of~$\deg(S)$ over all crossing $(k-1)$-tuples which avoid~$V_i$; in the same way we define the minimum multipartite codegree of~$H$, denoted~$\delta^*_{k-1}(H)$, to be the minimum of~$\deg(S)$ over all crossing $(k-1)$-tuples. For general (i.e. not necessarily $k$-partite) $k$-graphs~$H$ we likewise define the \emph{minimum codegree of~$H$}, denoted~$\delta_{k-1}(H)$, to be the minimum of~$\deg(S)$ over all sets of~$k-1$ vertices. 

K\"{u}hn and Osthus~\cite{kuhn-ost} gave asymptotically best-possible sufficient minimum codegree conditions for a perfect matching in both settings, specifically $\delta^*_{k-1}(H) > n/2 + O(\sqrt{n\log(n)})$ in the multipartite setting (with~$n$ being the common size of the vertex classes) and $\delta_{k-1}(H) > n/2 + O(\sqrt{n\log(n)})$ in the non-partite setting (where $n = |V(H)|$ and we assume the necessary condition that~$k$ divides~$n$ without further comment). For the general setting R\"{o}dl, Ruci\'{n}ski and Szemer\'{e}di~\cite{rrs1} improved this result to the exact best-possible minimum codegree condition for large values of~$n$, namely $\delta_{k-1}(H)\geq n/2-k+c$ where $c \in \{1/2, 1, 3/2, 2\}$ depends on the values of~$n$ and~$k$. Similarly, in the multipartite setting Aharoni, Georgakopoulos and Spr\"{u}ssel~\cite{ags} showed that (for large~$n$) if~$H$ is a $k$-partite $k$-graph such that $\delta_{[k]\setminus\{1\}}(H)>n/2$ and $\delta_{[k]\setminus\{2\}}(H)\geq n/2$, then (with no further conditions on the minimum multipartite codegree into parts $V_3, \ldots, V_k$)~$H$ must contain a perfect matching; in particular this implies that the condition $\delta^*_{k-1}(H) > n/2$ is a best-possible sufficient condition for the existence of a perfect matching in~$H$. 

Turning now to smaller (i.e. non-perfect) matchings, K\"{u}hn and Osthus~\cite{kuhn-ost} gave a short elegant argument considering the size of a largest matching to show that if~$H$ is a $k$-partite $k$-graph with vertex classes each of size~$n$, and $\delta^*_{k-1}(H) \geq n/k$, then~$H$ contains a matching of size~$n-k+2$. It follows immediately from their argument that for $s \leq n-k+2$, if $\delta^*_{k-1}(H) \geq \lceil{s/k}\rceil$ then~$H$ contains a matching of size~$s$. R\"{o}dl, Ruci\'{n}ski and Szemer\'{e}di~\cite{rrs1} adapted this argument to the general setting, showing that for $s \leq \lfloor {n/k} \rfloor -k+2$ every $k$-graph~$H$ with $\delta_{k-1}(H) \geq s$ has a matching of size~$s$. These results left open the correct condition for a matching of size~$s$ in the cases $n-k+3 \leq s < n$ (in the multipartite setting) or size $\lfloor {n/k} \rfloor -k+3 \leq s < n/k$ (in the general setting). For the general setting this question was answered asymptotically by R\"{o}dl, Ruci\'{n}ski and Szemer\'{e}di~\cite{rrs2} before Han~\cite{han} gave an exact answer for large~$n$, showing that $\delta_{k-1}(H) \geq s$ is a best-possible sufficient condition for these cases also. In the multipartite setting R\"{o}dl and Ruci\'{n}ski~\cite[Problem 3.14]{rodl_ruc} asked, for a $k$-partite $k$-graph~$H$ with vertex classes each of size~$n$, whether the condition $\delta^*_{k-1}(H) \geq \lceil n/k \rceil$ was sufficient for a matching of size~$n-1$, and this was answered in the affirmative by Lu, Wang and Yu~\cite{lwy} and independently by Han, Zang and Zhao \cite{main}; the latter authors also showed that $\delta^*_{k-1}(H)\geq \lfloor n/k \rfloor$ suffices in the case where~$n \equiv 1 \mod k$. This established the correct condition for~$s=n-1$, and the correct condition for the remaining values of~$s$, namely that $\delta^*_{k-1}(H) \geq \lceil s/k \rceil$ is a best-possible sufficient condition, follows immediately from the proof given by Han, Zang and Zhao. Note that Theorem~\ref{thm_main} gives a wide-ranging generalisation of these multipartite results by giving a much broader collection of codegree conditions which ensure the existence of a matching of size~$n-1$. 

Given a $k$-graph~$H$ it is also natural to consider conditions on the degree,~$\deg(T)$, of sets $T \subseteq V(H)$ of fewer than~$k-1$ vertices (which is again defined to be the number of edges containing~$T$). For each $1 \leq t \leq k-1$ we define the \emph{minimum $t$-degree} of~$H$, denoted~$\delta_t(H)$, to be the minimum of~$\deg(T)$ over all sets $T \subseteq V(H)$ of size~$t$; in particular~$\delta_1(H)$ is also commonly referred to as the minimum vertex degree of~$H$. Much work~\cite{AFHRRS, bde, CGHW, CK, DH, HPS, han3, Kh2, Kh, kot, kot2, LYY, MR2, TZ} has focused on identifying best-possible conditions on~$\delta_t(H)$ which suffice to ensure the existence of a perfect matching (or a matching of a given size~$s$) in~$H$ for~$t \leq k-2$, however for~$t < 0.42k$ the problem remains open in general.

Turning back to the multipartite setting in which~$H$ is a $k$-partite $k$-graph whose vertex classes each have size~$n$, for the case~$k=3$, Lo and Markstr\"{o}m \cite{lo-mark} gave the exact best-possible minimum vertex degree condition to ensure a perfect matching in a $3$-partite $3$-graph for sufficiently large~$n$, as well as the exact best-possible minimum vertex degree condition to ensure a matching of size~$m+1$ for every~$m$ when~$n \geq 3^7m$. Pikhurkho instead considered an inhomogeneous type of degree condition, writing $\delta_{I}(H):=\min\{\deg(S): |S \cap V_j|=1 \mbox{ if and only if } j \in I\}$ for each set $I \subseteq [k]$; he showed that if there exists $\emptyset \neq L \subsetneq [k]$ with $\frac{\delta_L(H)}{n^{k-|L|}}+\frac{\delta_{[k]\setminus L}(H)}{n^{|L|}} = 1+\Omega\left(\sqrt{\frac{\log(n)}{n}}\right)$ then~$H$ contains a perfect matching. Keevash and Mycroft~\cite{KM} (see also Han~\cite{han4}) also gave degree sequence conditions -- where an edge may be constructed vertex-by-vertex with a given number of options at each step -- which suffice to ensure a perfect matching in this setting.

We recommend the surveys of K\"uhn and Osthus~\cite{kosurvey}, R\"odl and Ruci\'nski~\cite{rodl_ruc} and Zhao~\cite{zhao} for further information on this and related problems.

\subsection{Main ideas} \label{sec:ideas} As in Theorems~\ref{thm_HZZ} and~\ref{thm_extremal_n-1}, let~$H$ be a $k$-partite $k$-graph with vertex classes $V_1, \dots, V_k$ each of size $n \geq n_0$ in which $\delta_{[k]\setminus \{i\}}(H) \geq a_i$ for each $i \in [k]$, where $a_1 \geq a_2 \geq \ldots \geq a_k$. At first glance it may seem surprising that the cases of Theorem~\ref{thm_main} which remained open are those in which $a_2, \dots, a_k$ are all small, since in light of Fact~\ref{prop_fact} we may then assume that~$a_1$ is very close to~$n$, meaning that~$H$ is extremely dense. However, it is possible in this case that the condition that $\delta_{[k]\setminus \{1\}}(H) \geq a_1$ is satisfied by the existence of a set $S \subseteq V_1$ of size~$a_1$ whose vertices form edges with all crossing~$(k-1)$-tuples which avoid~$V_1$. This makes it trivial to find a matching covering all vertices of~$S$, but such a matching is still not large enough to meet the requirement of Theorem~\ref{thm_main}. Consequently our matching must also contain edges which do not intersect~$S$, but the fact that $a_2, \dots, a_k$ are all small means that the $k$-graph formed by such edges can be very sparse. For instance, if $a_2, \dots, a_k$ are all of constant size, then the number of such edges could be of order~$O(n^{k-1})$. The central difficulty in proving Theorem~\ref{thm_extremal_n-1}  is to construct a sufficiently large matching in this sparse environment. 

\medskip \noindent {\bf Rainbow matchings under a combination of degree and multiplicity.} 
Most of the work in proving Theorem~\ref{thm_extremal_n-1} is contained in Section~\ref{sec:comb}, where we consider large rainbow matchings in families of $k$-partite $k$-graphs which satisfy a combination of degree and multiplicity conditions. For this we make the following definitions: let $\mathcal{H}:=\{H_1, \ldots, H_t\}$ be a family of $k$-graphs on the same vertex set~$V$. A \emph{rainbow matching} in~$\mathcal{H}$ is a set~$M$ of pairwise-disjoint $k$-tuples of vertices of~$V$ such that there is an injective map $\phi: M \to [t]$ for which $e \in E(H_{\phi(e)})$ for every~$e \in M$. In other words,~$M$ is a matching within the union of the $k$-graphs in~$\mathcal{H}$ in which each edge of~$M$ is taken from a distinct member of~$\mathcal{H}$. Implicitly fixing such an injection~$\phi$, we say that the edge~$e \in M$ has \emph{colour}~$j$ if and only if~$\phi(e) = H_j$ (so the colour of~$e$ is the unique $k$-graph in~$\mathcal{H}$ from which it is taken, not simply a $k$-graph in~$\mathcal{H}$ which contains~$e$, of which there may be many). We say that a rainbow matching in~$\mathcal{H}$ is \emph{perfect} if it has size~$t$ (the size of~$\mathcal{H}$), meaning that~$\mathcal{H}$ has precisely one edge of each colour. 

The scenario we work with in Section~\ref{sec:comb} is that we have a family $\mathcal{H}=\{H_1, \ldots, H_t\}$ of $k$-partite $k$-graphs on common vertex classes $V_1, \dots, V_k$ each of size~$n$ which satisfies, for some integers $a_1, \dots, a_k$ and~$m$, the conditions that
\begin{enumerate}[(i)]
    \item \label{sketchpropi} for every $j \in [t]$ and $i \in [k]$ we have $\delta_{[k]\sm \{i\}}(H_j) \geq a_i$, and
    \item \label{sketchpropii} every crossing $k$-tuple of vertices is an edge of at least~$m$ of the $k$-graphs in $\mathcal{H}$.
\end{enumerate}
We describe in the next subsection how such a family arises naturally in the context of Theorem~\ref{thm_extremal_n-1} as a collection of link graphs of vertices in the first vertex class of~$H$. The key parameters we work with are~$m$ and $q := \sum_{i \in [k]} a_i$.
Indeed, the key step in the proof of Theorem~\ref{thm_extremal_n-1} is Lemma~\ref{lem:perfect_rainbow_matching}, which states that --- provided~$m$ is small relative to~$q$, and assuming the necessary condition that~$t \geq m+q$ --- conditions~(\ref{sketchpropi}) and~(\ref{sketchpropii}) are sufficient to guarantee a rainbow matching in~$\mathcal{H}$ of size~$m+q$ (for~$n$ sufficiently large).

Whilst this result is sufficient for our purposes in this paper, we suggest that conditions~(\ref{sketchpropi}) and~(\ref{sketchpropii}) should in fact guarantee a rainbow matching of size~$m+q$ for a much wider range of values of~$m$ and~$q$; given the natural way in which conditions~(\ref{sketchpropi}) and~(\ref{sketchpropii}) arise (described below) we hope that this question may be of independent interest. On the other hand, we cannot hope for these conditions to ensure a rainbow matching of size larger than~$m+q$; to see this, let $H_1, \dots, H_m$ be complete $k$-partite $k$-graphs, whilst $H_{m+1}, \dots, H_t$ are as in Example~\ref{ex_space}. We then have no rainbow matching in~$\mathcal{H}$ with size larger than~$m+q$, since no rainbow matching in $\{H_{m+1}, \dots, H_t\}$ has size larger than $\sum_{i \in [k]} a_i = q$.

For~$m=0$ (meaning that condition~(\ref{sketchpropii}) provides no information) and~$q \geq t$ Pokrovskiy~[personal communication] gave an elegant induction argument to show that a perfect rainbow matching (i.e. of size~$t$) must exist; this argument was the inspiration for our Lemma~\ref{lem:rainbow_or_domset}, and we include it here for completeness (though we do not use it in our proof). For a $k$-graph~$H$ we say that a set $U \subseteq V(H)$ is a {\it dominating set} if $U \cap e \neq \emptyset$ for every $e \in E(H)$.

\begin{lemma} \label{lem_alexey}
Let $V_1, V_2, \dots, V_k$ be disjoint sets of vertices, and let $\mathcal{H}:=\{H_1,\ldots,H_t\}$ be a family of $k$-partite $k$-graphs each with vertex classes $V_1, \ldots, V_k$ such that for every $i \in [k]$ and $j \in [t]$ we have $\delta_{[k]\sm \{i\}}(H_j) \geq a_i$. If $t \leq \sum_{i=1}^k a_i \leq \frac{\min_{i \in [k]}|V_i|}{k^{10}}$, then~$\mathcal{H}$ has a perfect rainbow matching.
\end{lemma}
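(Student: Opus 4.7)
The plan is to proceed by induction on~$t$, with the trivial base case $t=0$ handled by the empty rainbow matching. For the inductive step, given $\mathcal{H} = \{H_1,\dots,H_t\}$ satisfying the hypotheses, I would aim to select a single edge $e = (v_1,\dots,v_k) \in H_t$ and apply the induction hypothesis to the restricted family $\mathcal{H}' := \{H_j|_{V \setminus V(e)}\}_{j \in [t-1]}$ on vertex classes $V_i \setminus \{v_i\}$ of size $n-1$. If the hypotheses still hold for $\mathcal{H}'$ with parameter $t-1$ and updated codegree parameters~$a_i'$, then induction delivers a perfect rainbow matching~$M'$ of $\mathcal{H}'$, and $M' \cup \{e\}$ is the desired perfect rainbow matching of~$\mathcal{H}$.

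Each updated codegree satisfies $a_i' \geq a_i - 1$, dropping below $a_i$ exactly when $v_i$ lies in the ``tight codegree neighborhood'' $N_{H_j}(S)$ for some $j \in [t-1]$ and some crossing $(k-1)$-tuple~$S$ avoiding~$V_i$ with $\deg_{H_j}(S) = a_i$. The upper bound $\sum a_i' \leq (n-1)/k^{10}$ is easily preserved since the original inequality has massive slack. The delicate condition is the lower bound $\sum a_i' \geq t-1$: since the total codegree-sum decreases by at most~$k$, this holds automatically whenever $\sum a_i \geq t+k-1$, and in this loose regime any edge of~$H_t$ (which exists from the codegree condition, as $\sum a_i \geq t \geq 1$ forces some $a_i \geq 1$) suffices.

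The main difficulty is the tight regime $\sum a_i - t \leq k-2$, in which one must select $e \in H_t$ so that at most $\sum a_i - (t-1)$ of the coordinates $v_i$ are ``bad,'' i.e.,\ lie in the set
\[
B_i := \bigcup_{j \in [t-1]} \bigcup_{S \text{ tight for } (H_j, V_i)} N_{H_j}(S) \subseteq V_i.
\]
The main obstacle is establishing the existence of such an edge. The leverage available is the enormous slack $n \geq k^{10}\sum a_i$, which ensures that the vertex classes are much larger than anything controlled by the codegree parameters; intuitively, even if the~$B_i$ are non-trivial, the edges of~$H_t$ are sufficiently spread out over the~$n^k$ crossing $k$-tuples that an averaging argument over $E(H_t)$ should locate a suitable edge. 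Making this precise requires a careful counting or structural bound on~$|B_i|$ (or, better, on the interaction of~$B_i$ with the edges of~$H_t$) that genuinely benefits from the $n \gg k^{10}\sum a_i$ gap. I expect this to be the heart of the argument, and the elegance of Pokrovskiy's induction likely lies in choosing the right viewpoint on the bad sets so that the vast slack in~$n$ dominates.
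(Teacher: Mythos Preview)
Your strategy of choosing an edge $e\in H_t$, deleting all of $V(e)$, and inducting on $\{H_j[V\setminus V(e)]\}_{j<t}$ is not the paper's approach, and in the tight regime it cannot be completed as stated: the required ``good edge'' need not exist. Take $k=2$, $n\ge 2^{11}$, $a_1=a_2=1$, $t=2$, and let $H_1=\{(i,i):i\in[n]\}$ and $H_2=\{(i,i+1\bmod n):i\in[n]\}$. For any edge $(v_1,v_2)\in H_2$ we have $v_2=v_1+1$, and after deleting $\{v_1,v_2\}$ the graph $H_1$ has the vertex $v_1+1\in V_1$ and the vertex $v_1\in V_2$ isolated, so the restricted $H_1$ has minimum degree $0$ from both sides; hence $a_1'=a_2'=0$ and $\sum a_i'=0<1=t-1$. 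Thus every edge of $H_2$ has both coordinates ``bad'' in your sense, and no averaging over $E(H_t)$ can help: the slack in $n$ is irrelevant because the bad sets are all of $V_1$ and $V_2$. The hoped-for bound on $|B_i|$ simply does not hold.

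The paper's (Pokrovskiy's) induction avoids this entirely by never deleting a full edge before inducting. After reducing to $\sum a_i=t$ and $a_1\ge t/k$, it splits on whether $H_t$ contains a matching of size $kt-k+1$. If so, one applies induction to $\{H_1,\dots,H_{t-1}\}$ with \emph{no vertices removed}, obtaining a rainbow matching $M'$ of size $t-1$; since $|V(M')|=k(t-1)<kt-k+1$, some edge of the large matching in $H_t$ avoids $M'$. If not, a maximum matching in $H_t$ gives a dominating set $D$ with $|D|<k^2t$; counting neighbours in $D$ of $k^5t$ disjoint $(k-1)$-tuples avoiding $V_1\cup D$ yields a vertex $v\in V_1\cap D$ lying in at least $k^2t$ edges of $H_t$ with the other $k-1$ vertices outside $D$ and pairwise disjoint. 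One then deletes \emph{only} $v$ and inducts: the codegree sum drops by at most $1$, so the hypothesis $\sum a_i'\ge t-1$ holds automatically, and the resulting rainbow matching of size $t-1$ misses at least one of those $k^2t$ edges through $v$. The point is that the extension edge in $H_t$ is located \emph{after} the inductive call, not before, and this is what lets the argument go through without any control on ``bad'' vertices.
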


\begin{proof}
We proceed by induction on~$t$; the case~$t=0$ is trivial, so suppose that the lemma holds for families of size~$t-1$. Without loss of generality, we may assume that $t=\sum_{i=1}^k a_i$ and that $a_1 \geq \frac{t}{k}$.

{\bf{Case 1:}}~$H_t$ contains a matching~$M$ of size~$kt-k+1$. By induction applied to $\mathcal{H'}=\{H_1, \ldots, H_{t-1}\}$, the family~$\mathcal{H}'$ admits a rainbow matching~$M'$ of size~$t-1$. Since~$M$ has size~$kt-k+1$ and $|V(M')| = k(t-1)$, there must exist an edge~$e \in M$ with $e \cap V(M') = \emptyset$, and then~$M' \cup \{e\}$ is a rainbow matching for $\mathcal{H}=\{H_1, \ldots, H_{t}\}$ of size~$t$, as required.

{\bf{Case 2:}}~$H_t$ contains no matching~$M$ of size~$kt-k+1$. It follows that the vertex set of a maximum matching in~$H_t$ is a dominating set~$D$ of order strictly less than~$k^2t$. Let $e_1, \ldots, e_{k^5t}$ be an arbitrary family of pairwise-disjoint crossing $(k-1)$-tuples avoiding~$V_1$ which do not intersect~$D$. This is possible since~$D$ intersects each~$V_i$ in at most~$kt$ vertices and $|V_i|-kt \geq k^{10}t - kt \geq k^5t$ for each $i \in [k]$. The degree condition implies that for each $j \in [k^5t]$ the $(k-1)$-tuple~$e_j$ has at least~$a_1$ neighbours in~$H_t$ (that is, $|N_{H_t}(e_j)| \geq a_1$). Since every edge in~$H_t$ intersects~$D$, and~$e_j$ is disjoint from~$D$, we have $N_{H_t}(e_j) \subseteq D$ for each $j \in [k^5t]$. So by averaging, some vertex $v \in V_1$ is a neighbour of at least $k^5ta_1/|D| \geq k^2t$ of the $(k-1)$-tuples~$e_j$, where the inequality holds since $a_1 \geq t/k$ and $|D| \leq k^2t$. By relabelling the~$e_j$ if necessary we may assume that $v \in \bigcap_{j \in [k^2t]} N_{H_t}(e_j)$. Let $V' = \bigcup_{i \in [k]} V_i \sm \{v\}$ and let $H'_j = H_j[V']$ for each $j \in [t-1]$; by induction the family $\mathcal{H'} = \{H'_1, \ldots, H'_{t-1}\}$ then admits a rainbow matching~$N$ of size~$t-1$ which, by construction, does not cover the vertex~$v$. Furthermore, $|V(N)| = k(t-1) < k^2t$ and so there exists $j \in [k^2t]$ for which $e_j \cap V(N) = \emptyset$. It follows that $M = N \cup \{\{v\} \cup e_j\}$ is a rainbow matching for $\mathcal{H}=\{H_1, \ldots, H_{t}\}$ of size~$t$, as required.
\end{proof}

\medskip \noindent {\bf Outline proof of Theorem~\ref{thm_extremal_n-1}.} To illustrate how the result for rainbow matchings described above can be used to prove Theorem~\ref{thm_extremal_n-1}, we consider here the case of Theorem~\ref{thm_extremal_n-1} in which $\sum_{i \in [k]} a_i = n$; the argument for the other cases not covered by Fact~\ref{prop_fact} is broadly similar. In this case we wish to find a perfect matching in~$H$. Write $q := \sum_{2 \leq i \leq k} a_i = n-a_1$, so the assumption of Theorem~\ref{thm_extremal_n-1} implies that~$q$ is small. We choose an arbitrary perfect matching~$M$ in the complete $(k-1)$-partite $(k-1)$-graph on $V_2, \dots, V_k$, and use a double-counting argument (Proposition~\ref{lemma_extremal_count}) to show that there is a subset $X \subseteq V_1$ of size close to~$n-q$ such that every vertex in~$X$ forms an edge with many $(k-1)$-tuples in~$M$. Let $Z = V_1 \sm X$ be the set of the remaining vertices of~$V_1$, so~$|Z|$ is slightly larger than~$q$. 

For each vertex $v \in Z$ let~$L_v(H)$ be the \emph{link graph} of~$v$; this is the $(k-1)$-partite $(k-1)$-graph with vertex classes $V_2, \dots, V_k$ whose edges are all crossing $(k-1)$-tuples~$f$ for which~$\{v\} \cup f \in E(H)$. The assumption that $\delta_{[k]\setminus \{i\}}(H) \geq a_i$ for $i \in [2, k]$ yields a similar condition on the link graph~$L_v(H)$ for each $v \in Z$, namely that for each $i \in [2, k]$, each crossing $(k-2)$-tuple avoiding~$V_i$ has at least~$a_i$ neighbours in~$V_i$. Moreover, the assumption that $\delta_{[k] \setminus \{1\}}(H) \geq a_1$ yields a multiplicity condition on the link graphs~$L_v(H)$, namely that every crossing $(k-1)$-tuple~$f$ avoiding~$V_1$ appears in at least~$a_1$ of the link graphs~$L_v(H)$ of vertices $v \in V_1$; it follows that~$f$ appears in at least $m := |Z| - (n-a_1) = |Z| - q$ of the link graphs~$L_v(H)$ of vertices $v \in Z$ (see Proposition~\ref{prop_complete}). In other words, taking $U_i := V_{i+1}$ for each $i \in [k-1]$ the family $\mathcal{H} = \{H_1, \dots, H_{|Z|}\} = \{L_v(H) : v \in Z\}$ of link graphs of vertices of~$Z$ is a family of $(k-1)$-partite $(k-1)$-graphs with common vertex classes $U_1, \dots U_{k-1}$ each of size~$n$ which satisfies conditions~(\ref{sketchpropi}) and~(\ref{sketchpropii}) above (with the same~$q$ there as here and with~$m = |Z| - q$). So by Lemma~\ref{lem:perfect_rainbow_matching} (described above) there exists a rainbow matching in~$\mathcal{H}$ of size~$m+q = |Z|$. 

Taking each edge of this matching together with the corresponding vertex of~$Z$ gives a matching~$M_{rainbow}$ in~$H$ of size~$|Z|$ which covers every vertex of~$Z$. Using the fact that vertices in~$X$ form edges with many $(k-1)$-tuples in~$M$, and that crossing $(k-1)$-tuples avoiding~$V_1$ have many neighbours in~$V_1$, it is then straightforward to extend~$M_{rainbow}$ to a perfect matching in~$H$ (see the proof of Theorem~\ref{thm_extremal_n-1} in Section~\ref{sec:proof} for the details).

\subsection{Notation}

We write $[n]:=\{1,2,\ldots,n\}$ and $[a,b]:=\{a, a+1, \ldots, b\}$ for all~$b >a$ and $a,b,n \in \mathbb{N}$. If~$H$ is a $k$-graph with~$k \geq 3$, then for a $(k-1)$-tuple of vertices~$S \subseteq V(H)$, we write $N_H(S):=\{v \in V(H): S \cup \{v\} \in E(H)\}$ for the \emph{neighbourhood} of~$S$ in~$H$, so $\deg_H(S) = |N_H(S)|$. On the other hand, if~$G$ is a graph then for a set of vertices $S \subseteq V(G)$ we write~$N_G(S) :=  \bigcup_{x \in S} N_G(x)$; there should be no confusion as the former notation will never be used in the graph case. In both cases we may omit the subscript when the host graph is clear from the context.

Throughout the paper, in any condition of the form $\delta_{[k]\setminus \{i\}}(H) \geq a_i$ it should be assumed that~$a_i$ is a non-negative integer.

\section{Matchings under a combination of degree and multiplicity} \label{sec:comb}

In this section we prove the key lemma we need for the proof of Theorem~\ref{thm_extremal_n-1}, namely Lemma~\ref{lem:perfect_rainbow_matching}. As described in Section~\ref{sec:ideas}, the setting for this is that we have a family $\mathcal{H} = \{H_1, \dots, H_t\}$ of $k$-partite $k$-graphs on common vertex classes $V_1, \dots, V_k$, and each $k$-graph~$H_j$ in~$\mathcal{H}$ satisfies the multipartite codegree condition that $\delta_{[k]\setminus\{i\}}(H_j) \geq a_i$ for each $i \in [k]$. We write $q := \sum_{i=1}^k a_i$, and this is the key parameter concerning degrees in each~$H_j$. Our key lemma, Lemma~\ref{lem:perfect_rainbow_matching}, then says that if we also assume the multiplicity condition that each crossing $k$-tuple is contained in at least~$m$ of the $k$-graphs in~$\mathcal{H}$, and the necessary condition that~$t \geq m+q$, then (under some additional technical assumptions) there must be a rainbow matching in~$\mathcal{H}$ of size~$m+q$. We begin working towards this goal by proving the following weaker version of the lemma, in which we assume that~$t \geq m+q+k-1$, meaning that our matching may leave~$k-1$ colours unused; the lemma allows us some control over which colours these are. Note that here we also allow the sizes of vertex classes to differ from one another, to enable the lemma to be applied after a small number of vertices have been deleted from our original vertex classes.

\begin{lemma} \label{lem:almostperfectrainbowmatching}
Let $H_1, \dots, H_t$ be $k$-partite $k$-graphs with common vertex classes $V_1, \dots, V_k$ each of size at least~$n$. Suppose that
\begin{enumerate}[(i)]
    \item \label{lem:almostperfectrainbowmatching_i} for each~$i \in [k]$ and~$j \in [t]$ we have $\delta_{[k]\setminus\{i\}}(H_j) \geq a_i$, and
    \item \label{lem:almostperfectrainbowmatching_ii} each crossing $k$-tuple appears in at least~$m$ of the $k$-graphs~$H_j$.
\end{enumerate}
Also write $q := \sum_{i \in [k]} a_i$. If $n, t \geq m + q + k - 1$ then for each set $C \subseteq [t]$ with size $|C| \leq q/k$ there is a rainbow matching in $\mathcal{H} := \{H_j : j \in [t]\}$ of size~$m+q$ which includes an edge of colour~$j$ for each~$j \in C$.
\end{lemma}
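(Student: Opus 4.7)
The plan is to argue by contradiction. Fix a rainbow matching $M$ in $\mathcal{H}$ of maximum size subject to containing an edge of every colour in $C$; write $s := |M|$ and $U := [t] \setminus \phi(M)$. I assume $s < m+q$, so $|U| \geq k$, and aim to derive a contradiction by constructing a rainbow matching of size $s+1$ still containing all colours of $C$.

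Maximality gives immediate structural information. Since no unused colour contributes an edge disjoint from $V(M)$, every edge of $H_j$ for $j \in U$ meets $V(M)$, and every crossing $k$-tuple disjoint from $V(M)$ lies in $H_j$ only for $j \in \phi(M)$. Combined with condition~(\ref{lem:almostperfectrainbowmatching_ii}) this forces $m \leq s$; combined with condition~(\ref{lem:almostperfectrainbowmatching_i}) applied to any crossing $(k-1)$-tuple $S$ in $V \setminus V(M)$ avoiding $V_i$, it forces $N_{H_j}(S) \subseteq V(M) \cap V_i$ for every $j \in U$, and in particular $a_i \leq s$ for each $i$. So $s \geq \max(m, q/k)$.

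The augmenting step is a swap: I aim to delete one edge $f \in M$ whose colour $c := \phi(f)$ lies outside $C$, and add two disjoint edges $e_1, e_2$ of distinct colours from $(U \cup \{c\})$ with $V(e_1) \cup V(e_2) \subseteq V(f) \cup (V \setminus V(M))$. To locate $e_1$, I pick any $i$ with $a_i \geq 1$ and double-count triples $(S, j, v)$ where $j \in U$, $S$ is a crossing $(k-1)$-tuple in $V \setminus V(M)$ avoiding $V_i$, and $v \in N_{H_j}(S)$. The count is at least $|U|(n-s)^{k-1} a_i$, distributed among the $s$ vertices of $V(M) \cap V_i$, so some popular $v^\ast$ appears in many triples. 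Using $|C| \leq q/k \leq s$, I can arrange that the edge $f$ of $M$ containing $v^\ast$ has $c \notin C$ (if the first $v^\ast$ lies in a $C$-coloured edge, the popularity count and the smallness of $|C|$ allow me to choose another heavy vertex, or repeat with a different index $i$). I then set $e_1 := \{v^\ast\} \cup S$ for one of the witnesses, with colour $c_1 \in U$.

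The main obstacle is producing $e_2$ of colour $c_2 \in (U \cup \{c\}) \setminus \{c_1\}$ with $V(e_2)$ in the allowed region $(V(f) \setminus \{v^\ast\}) \cup (V \setminus (V(M) \cup V(S) \cup \{v^\ast\}))$. Here each vertex class has at least $n - s \geq k$ available vertices and the candidate colour set has size $|U|-1 \geq k-1$; this is where the $k-1$ slack in $t \geq m+q+k-1$ becomes decisive. I locate $e_2$ by a second counting argument that exploits either condition~(\ref{lem:almostperfectrainbowmatching_ii}) (which supplies $\geq m$ colours containing any crossing $k$-tuple in the allowed region, and by the slack some such colour is a viable $c_2$) or condition~(\ref{lem:almostperfectrainbowmatching_i}) applied to the $(k-1)$-tuple $V(f) \setminus \{v^\ast\}$ (which has $\geq a_i$ neighbours in each $H_{c_2}$, of which the abundance of candidate $c_2$ guarantees one lies outside $V(M)$), with case analysis according to whether $m$ or $a_i$ is dominant. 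Once $e_2$ is found, $(M \setminus \{f\}) \cup \{e_1, e_2\}$ is a rainbow matching of size $s+1$ containing $C$, contradicting the maximality of $M$.
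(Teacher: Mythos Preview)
Your overall strategy (take a maximum rainbow matching containing $C$, then augment by deleting one edge and inserting two) is the same as the paper's, but the crucial step --- producing the second edge $e_2$ --- has a genuine gap, and neither of your two routes works as stated. For the route via condition~(\ref{lem:almostperfectrainbowmatching_ii}): a crossing $k$-tuple $g$ in the allowed region does lie in at least $m$ of the $H_j$, but if $g \subseteq V \setminus V(M)$ then by maximality all of those colours lie in $\phi(M)$, so none is in $U$; there is no reason the single specific colour $c=\phi(f)$ should be among them, and the slack $t \geq m+q+k-1$ does not bear on this. For the route via condition~(\ref{lem:almostperfectrainbowmatching_i}): the $(k-1)$-tuple $V(f) \setminus \{v^*\}$ has at least $a_i$ neighbours in each $H_{c_2}$, but those neighbours could be the \emph{same} $a_i$ vertices of $V(M)\cap V_i$ for every candidate $c_2$; ``abundance of candidate $c_2$'' therefore does not force any neighbour outside $V(M)$.

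The missing idea is to set up the pigeonhole \emph{before} committing to which edge of $M$ to delete, and to run it across all $k$ classes at once. Fix a crossing $k$-tuple $g$ together with $k$ pairwise-disjoint crossing $(k-1)$-tuples $t_1,\dots,t_k$ (with $t_i$ avoiding $V_i$), all disjoint from $V(M)$ and from each other; this is where the bound $n \ge m+q+k-1$ is used. Let $F := \{j: g \in E(H_j)\}$, so $|F|\geq m$ and $F \subseteq \phi(M)$. Any edge $e\in M$ with $\phi(e)\in F$ is ``flexible'': if $N_{H_d}(t_i)$ (with $d\in U$) meets such an $e$, remove $e$ and add both $g$ (coloured $\phi(e)$) and $t_i\cup\{v\}$ (coloured $d$). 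Hence one may assume each $N_{H_{d_i}}(t_i)$ (for distinct $d_1,\dots,d_k\in U$, available because $|U|\ge k$) lies entirely in the vertex set of $M' := \{e\in M:\phi(e)\notin F\}$, and $|M'|\le |M|-m \le q-1$. Now the $k$ tuples together contribute $\sum_i a_i = q > q-1$ neighbours into the edges of $M'$, so two distinct indices $i\ne i'$ hit the same edge $e\in M'$, and removing $e$ while adding $t_i\cup\{v\}$ and $t_{i'}\cup\{v'\}$ gives the augmentation. Your argument fixes a single class $i$ and a single removed edge $f$ too early and so cannot manufacture this collision. The paper also handles $C$ in a separate second phase (after securing size $m+q$), exploiting that at that moment strictly fewer than $|C|$ edges of $M$ carry colours in $C$; this sidesteps the edge case $|C|=q/k=\max_i a_i$ that your simultaneous treatment does not cleanly resolve.
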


\begin{proof}  We first show that~$\mathcal{H}$ admits a rainbow matching of size~$m+q$. Let~$M$ be a largest rainbow matching in~$\mathcal{H}$, and suppose for a contradiction that $|M| \leq m+q-1$. Also let~$D \subseteq [t]$ be the set of colours not used by~$M$, so $|D| = t - |M| \geq k$. Since $|V_i| \geq n \geq m+q+k-1$ for each $i \in [k]$, we may choose~$k$ pairwise-disjoint crossing $(k-1)$-tuples $t_1, \ldots, t_k$, each disjoint from~$V(M)$, such that~$t_i$ avoids~$V_i$ for each $i \in [k]$, as well as a crossing $k$-tuple~$f$ which is disjoint from~$V(M)$ and each of the~$t_i$. 
Let $F := \{j \in [t] : f \in E(H_j)\}$, so~$F$ is the set of colours that~$f$ could take in a rainbow matching. We have~$|F| \geq m$ by (\ref{lem:almostperfectrainbowmatching_ii}), and also~$F \cap D = \emptyset$ as otherwise we could add~$f$ in the corresponding colour to extend~$M$ to a larger rainbow matching. Let~$M' \subseteq M$ consist of all edges of~$M$ whose colour is not in~$F$, so $|M'| = |M| - |F| \leq q-1$.

For each $v \in V(M)$ let~$e_v$ denote the edge of~$M$ which contains~$v$. Now consider any $i \in [k]$, $j \in D$ and vertex $v \in N_{H_{j}}(t_i)$. We must have $v \in V(M)$, as otherwise we could extend~$M$ to a larger rainbow matching by adding the edge $t_i \cup \{v\}$ in colour~$j$ to obtain a contradiction. This means~$e_v$ is defined; moreover if~$e_v$ has colour~$c$ for some~$c \in F$, then $(M \setminus \{e_v\}) \cup \{f, \{v\} \cup t_i\}$ is a larger rainbow matching in~$\mathcal{H}$ (where~$f$ has colour~$c$ and $\{v\} \cup t_i$ has colour~$j$), again giving a contradiction. We conclude that for every $i \in [k]$ and $j \in D$ we have $e_v \in M'$ for every vertex $v \in N_{H_{j}}(t_i)$.

Since~$|D| \geq k$, we may choose distinct $d_1, \ldots, d_k \in D$ and note for each $i \in [k]$ that $N_{H_{d_i}}(t_i)$ has at least~$a_i$ neighbours in~$V_i$, each in an edge in~$M'$. Since $|M'| \leq q-1$, by~(\ref{lem:almostperfectrainbowmatching_i}) and our previous observation, there exist $i, i' \in [k]$  with $i \neq i'$ and $e \in M'$ such that $N_{H_{d_i}}(t_i) \cap e \neq \emptyset$ and $N_{H_{d_{i'}}}(t_{i'}) \cap e \neq \emptyset$. So there are vertices $v \in V_i$ and $v' \in V_{i'}$ with $v \in e \cap N_{H_{d_i}}(t_i)$ and $v' \in e \cap N_{H_{d_{i'}}}(t_{i'})$ respectively. It follows that $(M \setminus \{e\}) \cup \{t_i \cup \{v\}, t_{i'} \cup \{v'\}\}$, where $t_i \cup \{v\}$ and $t_{i'} \cup \{v'\}$ have colour~$d_i$ and~$d_{i'}$ respectively, is a larger rainbow matching in~$\mathcal{H}$, a contradiction. 

We conclude that~$\mathcal{H}$ admits a rainbow matching~$M$ of size~$m+q$. Suppose that there is some $j \in C$ for which~$M$ does not include an edge of colour~$j$. Fix~$i$ for which~$a_i$ is largest, so~$a_i \geq q/k$, and let~$f$ be a crossing $(k-1)$-tuple which avoids~$V_i$ and does not intersect~$V(M)$ (this is possible since~$n \geq m+q+1$). Since fewer than~$|C| \leq q/k$ vertices of~$V_i$ lie in edges of~$M$ with colours in~$C$, but $|N_{H_j}(f)| \geq a_i \geq q/k$, we may choose $v \in V_i$ for which $e:= f \cup \{v\} \in E(H_j)$ and either~$v \notin V(M)$ or~$v$ is contained in an edge~$e' \in M$ whose colour is not in~$C$. In the former case choose any edge~$e' \in M$ whose colour is not in~$C$. In both cases $(M \sm e') \cup \{e\}$, with~$e$ having colour~$j$, is then a rainbow matching of size~$m+q$ which, compared to~$M$, includes one more edge whose colour is in~$C$. Iterating this process gives the result.
\end{proof}

Our next lemma is a type of stability result: it states that if~$t$ is not too much larger than~$q$ then the multipartite degree assumption of Lemma~\ref{lem:almostperfectrainbowmatching} on its own is sufficient to guarantee a perfect rainbow matching in~$\mathcal{H}$ unless the family of $k$-graphs~$\mathcal{H}$ is extremal in the sense that almost all of the $k$-graphs in~$\mathcal{H}$ admit dominating sets of size close to~$q$ (note that the degree condition immediately implies that every dominating set must have size at least~$q$).

\begin{lemma} \label{lem:rainbow_or_domset}
Let $V_1, V_2, \dots, V_k$ be disjoint sets of vertices, each of size~$n$, and let $V := \bigcup_{i \in [k]} V_i$. Let $H_1, \dots, H_t$ be $k$-partite $k$-graphs each with vertex classes $V_1, \dots, V_k$, and suppose that for each $i \in [k]$ and $j \in [t]$ we have $\delta_{[k]\setminus\{i\}}(H_j) \geq a_i$. Write $q := \sum_{i \in [k]} a_i$.

For every $\eps \in (0, 1)$, if $t \leq (1+\eps)q$ and $n \geq 8k^3q/\eps$ then either
\begin{enumerate}[(i)]
    \item \label{lem:rainbow_or_domset_i} $\mathcal{H} :=\{H_1, \dots, H_t\}$ admits a perfect rainbow matching, or
    \item \label{lem:rainbow_or_domset_ii} there is a set $C \subseteq [t]$ of size at least~$(1-\eps)q$ such that for each~$j \in C$ the graph~$H_j$ admits a dominating set of size at most~$(1+2k\eps)q$.
\end{enumerate}
\end{lemma}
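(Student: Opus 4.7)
The plan is to show directly that if (i) fails then (ii) holds: assuming $\mathcal{H}$ admits no perfect rainbow matching, I will construct the set $C \subseteq [t]$ of colours with small dominating sets. First, fix a maximum rainbow matching $M$ in $\mathcal{H}$, set $s := |M|$, and let $D := [t] \setminus \phi(M)$ denote the unused colours, so $|D| = t - s \geq 1$. The starting observation is that maximality of $M$ forces $V(M)$ to dominate $H_j$ for every $j \in D$: any edge of $H_j$ disjoint from $V(M)$ could be added to $M$ (coloured $j$) to form a larger rainbow matching, contradicting maximality. This immediately yields a dominating set of size $ks$ for each such $H_j$, which suffices when $s$ is small but fails in general.

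The main task is to refine $V(M)$ into a smaller dominating set of size at most $(1+2k\eps)q$ for most $j \in D$. For each $j \in D$, I will call an edge $e \in M$ \emph{$j$-essential} if there is some $f \in E(H_j)$ with $f \cap V(M) \subseteq V(e)$. Starting from $V(M)$, I will iteratively discard the vertex set $V(e)$ of any matching edge $e$ whose removal preserves domination of $H_j$; the surviving set
\[
U_j := \bigcup \{V(e) : e \in M \text{ is } j\text{-essential}\}
\]
should then dominate $H_j$. Verifying that $U_j$ is dominating will require swap arguments in the style of Lemma~\ref{lem:almostperfectrainbowmatching} to handle edges $g \in E(H_j)$ whose intersection with $V(M)$ spans several matching edges: a two-edge swap shows that some retained $V(e)$ must intersect such $g$, else we could augment $M$ to a strictly larger rainbow matching.

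An averaging argument will then bound the number of $j$-essential edges. Each essential pair $(j, e)$ is certified by some $f \in E(H_j)$ with $f \cap V(M) \subseteq V(e)$ and the remaining $k - |f \cap V(e)|$ vertices of $f$ outside $V(M)$. The codegree conditions $\delta_{[k] \setminus \{i\}}(H_j) \geq a_i$, together with $n \geq 8k^3q/\eps$, should bound the total number of essential pairs across all $j \in D$ by roughly $q \cdot |D|$; by averaging, at most $\eps q$ colours $j \in D$ have more than $(1+2k\eps)q/k$ essential edges, so at least $|D| - \eps q$ colours satisfy $|U_j| \leq (1+2k\eps)q$. If $|D| \geq q$ this gives $|C| \geq (1-\eps)q$ directly. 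If instead $|D| < q$, then $s$ is close to $t \leq (1+\eps)q$, and a parallel analysis applied to the used colours $c \in \phi(M)$---each of which may be ``swapped out'' of $M$ and treated as an effectively unused colour via a second maximum rainbow matching obtained by the swap---will supply the additional colours needed to reach $|C| \geq (1-\eps)q$.

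The hardest part will be the averaging step, which requires both verifying that the greedy cleaning procedure actually produces a dominating set (handling edges of $H_j$ meeting $V(M)$ in multiple classes) and obtaining the essential-pair count from the codegree conditions. The constant $8k^3/\eps$ in the hypothesis on $n$ is precisely what will be needed to make the averaging tight enough to yield the $(1-\eps)q$ threshold for $|C|$ and, simultaneously, to guarantee enough ``free'' $(k-1)$-tuples disjoint from $V(M)$ to carry out every swap argument invoked along the way.
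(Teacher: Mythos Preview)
Your approach is quite different from the paper's, and there is a genuine gap at the heart of it. The paper argues in the other direction: assuming (ii) fails, it relabels so that $H_1,\dots,H_\ell$ (with $\ell = t-q+\lceil\eps q\rceil$) each have no dominating set of size at most $(1+2k\eps)q$, and then \emph{constructs} a perfect rainbow matching by a two-pass procedure. Going from $j=t$ down to $j=1$ it reserves for each $H_j$ either a large matching, a single high-degree vertex (found by averaging over a dominating set coming from a maximum matching in $H_j[U_j]$), or, for $j<\ell$, a single edge --- the last option being available precisely because $H_j$ has no small dominating set, so after deleting the at most $(1+2k\eps)q$ reserved vertices an edge must remain. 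It then builds the rainbow matching forward using these reservations. No maximum rainbow matching is ever analysed.

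Your essential-edge refinement of $V(M)$ cannot in general produce a dominating set of size close to $q$. Take the space-barrier family: fix $A_i\subseteq V_i$ with $|A_i|=a_i$, let every $H_j$ be the $k$-graph whose edges are the crossing $k$-tuples meeting $\bigcup_i A_i$, and set $t=\lfloor(1+\eps)q\rfloor$. A maximum rainbow matching $M$ has size exactly $q$. For any unused colour $j$ and any $e\in M$, pick $v\in e\cap\bigcup_i A_i$ and complete $v$ to an edge $f$ using $k-1$ vertices outside $V(M)$; then $f\cap V(M)=\{v\}\subseteq V(e)$, so $e$ is $j$-essential and moreover cannot be discarded by your iterative procedure. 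Thus every edge of $M$ is $j$-essential for every $j\in D$, so $U_j=V(M)$ has size $kq$, exceeding $(1+2k\eps)q$ for all small $\eps$. Note that here (ii) does hold --- the set $\bigcup_i A_i$ of size $q$ dominates every $H_j$ --- but your construction cannot find it. The averaging step fails for the same reason: the total number of essential pairs here is exactly $q|D|$, matching your ``roughly $q\cdot|D|$'' estimate, yet \emph{every} colour has $q$ essential edges, which is far above $(1+2k\eps)q/k$; a Markov-type bound from a total of $q|D|$ simply cannot force most colours below $q/k$. The swap-out idea for used colours inherits the same defect, since after swapping you are again running the essential-edge analysis on a size-$q$ matching in the same family.
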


\begin{proof}
Suppose that property (\ref{lem:rainbow_or_domset_ii}) does not hold. By relabelling the~$H_i$, we may assume that the first $\ell := t-q + \lceil \eps q \rceil$ graphs $H_1, H_2, \dots, H_\ell$ each do not admit a dominating set of size at most~$(1+2k\eps)q$. We begin by proceeding through the graphs~$H_j$ in reverse order, setting aside for each either a vertex~$v_j$ or an edge~$e_j$, which we will subsequently use to construct a perfect rainbow matching in~$\mathcal{H}$. We also keep track of the sets~$U_j$ of all vertices in~$V$ which have not appeared as~$v_{j'}$ or in~$e_{j'}$ for any previously-considered~$j' > j$ (note that~$U_j$, like~$V$, may contain vertices from all vertex classes). So we set~$U_t := V$, and then for each $j = t, t-1, \dots, 1$ (in that order) we do the following.
\begin{enumerate}[(a)]
    \item \label{matchingpropa} If~$H_j[U_j]$ admits a matching of size~$kj$, then set~$U_{j-1} = U_j$.
    \item \label{matchingpropb} Otherwise, if~$H_j[U_j]$ contains a vertex~$x_j$ with $\deg_{H_j[U_j]}(x_j) \geq kjn^{k-2}$, then set~$U_{j-1} = U_j \sm \{x_j\}$.
    \item \label{matchingpropc} Otherwise, arbitrarily choose an edge~$e_j \in H_j[U_j]$ and set~$U_{j-1} = U_j \sm e_j$.
\end{enumerate}
The key claim is that we can do this for every~$j$; in other words, one of~(\ref{matchingpropa}),~(\ref{matchingpropb}) and~(\ref{matchingpropc}) will always be possible. To show this we first argue by induction on~$t$ that for each~$j \geq \ell$ either~(\ref{matchingpropa}) or~(\ref{matchingpropb}) will be possible, meaning that for each such~$j$ we have~$|V \sm U_j| \leq t-j$. If~$H_j[U_j]$ admits a matching of size~$kj$ then~(\ref{matchingpropa}) is possible, so we may assume that a largest matching in~$H_j[U_j]$ has size less than~$kj$, whereupon the vertex set of this matching is a dominating set~$D$ in~$H_j[U_j]$ of size at most~$k^2j$. For each~$i\in [k]$ choose a set of exactly~$\lceil n^{k-1}/2\rceil$ crossing $(k-1)$-tuples~$f$ which avoid~$V_i$ and have~$f \subseteq U_j \sm D$; this is possible since there are~$n^{k-1}$ crossing $(k-1)$-tuples avoiding~$V_i$ and $|(V \sm U_j) \cup D| \leq t+k^2t \leq n/3$. Each such~$f$ has at least~$a_i$ neighbours in~$V_i$, each of which must lie in~$D$ since~$D$ is a dominating set. Summing over each~$i$ we obtain $q \lceil n^{k-1}/2\rceil$ edges of~$H_j$ which contain a vertex of~$D$. Since $|V \sm U_j| \leq t-j$ at most $(t-j)\lceil n^{k-1}/2\rceil$
of these edges are not in~$H_j[U_j]$. So for $j \geq \ell = t - q + \lceil \eps q \rceil$ we find that there are at least $(q-t+j) \lceil n^{k-1}/2\rceil \geq \eps q n^{k-1}/2$ edges of~$H_j[U_j]$ which contain a vertex of~$D$. Since~$|D| \leq k^2j$, by an averaging argument we find that some vertex~$x_j \in D$ has 
$$\deg_{H_j[U_j]}(x_j) \geq \frac{\eps qn^{k-1}}{2k^2j} \geq kjn^{k-2},$$
where the final inequality holds since $j^2 \leq t^2 \leq (1+\eps)^2q^2 \leq 4 q^2$ and $n \geq 8k^3q/\eps$. We conclude that, as claimed,~(\ref{matchingpropb}) is possible.

It remains to consider the cases when $j \leq \ell - 1 \leq t - q + \eps q \leq 2\eps q$. For this, first note that since either~(\ref{matchingpropa}) or~(\ref{matchingpropb}) was chosen for each $j \geq \ell$, in each such case we have 
$$|V \sm U_j| \leq (t - \ell + 1) + k \cdot (\ell - 1)  = t + (k-1)(\ell-1) \leq (1+\eps)q + 2(k-1)\eps q \leq (1 + 2k \eps) q.$$ 
The fact that~$H_j$ has no dominating set of size at most~$(1+2k\eps) q$ then implies that~$V \sm U_j$ is not a dominating set in~$H_j$, and therefore we may choose an edge~$e_j \in H_j[U_j]$ as required for~(\ref{matchingpropc}).

To complete the proof we now use the chosen vertices and edges to build a rainbow matching~$M_t$ in~$\mathcal{H}$, giving property~(\ref{lem:rainbow_or_domset_i}). Initially set~$M_0 = \emptyset$; by adding edges one by one we will build a rainbow matching~$M_j$ of size~$j$ in $\{H_1[U_j], \dots, H_j[U_j]\}$ for each~$j \in [t]$ (note in particular that for each~$j \in [t]$ the matching~$M_j$ will only use vertices in~$U_j$). 

For $j=1, 2, \dots, t$:
\begin{enumerate}[(1)]
    \item If we chose an edge~$e_j$ in~$H_j[U_j]$, then~$e_j$ is already defined; observe that by definition of~$U_{j-1}$ we know that~$e_j$ is disjoint from~$V(M_{j-1})$.
    \item If we chose a vertex~$x_j$, then choose an edge~$e_j$ in~$H_j[U_j]$ which is disjoint from~$V(M_{j-1})$. This is possible since~$x_j \notin V(M_{j-1})$ by definition of~$U_{j-1}$, and furthermore we have $\deg_{H_j[U_j]}(x_j) \geq kjn^{k-2} > |V(M_{j-1})|n^{k-2}$, and the latter term is an upper bound on the number of edges containing~$x_j$ which meet~$V(M_{j-1})$.
    \item Otherwise~$H_j[U_j]$ admits a matching of size~$kj$; since $|V(M_{j-1})| = k(j-1) < kj$ we may choose an edge~$e_j$ in~$H_j[U_j]$ which is disjoint from~$V(M_{j-1})$.
    \item In each case we add~$e_j$ to~$M_{j-1}$ to obtain the desired rainbow matching~$M_j$ of size~$j$ in $\{H_1[U_j], \dots, H_j[U_j]\}$.
\end{enumerate}
At the conclusion of this process,~$M_t$ is a rainbow matching of size~$t$ in $\{H_1, \dots, H_t\}$.
\end{proof}

The next proposition helps us to use the dominating sets obtained by applying Lemma~\ref{lem:rainbow_or_domset} in the case where~$\mathcal{H}$ does not contain a perfect rainbow matching, by showing that almost all vertices in these dominating sets must have very high degrees in their host graphs.

\begin{proposition} \label{prop:domset_to_degree}
Let~$H$ be a $k$-partite $k$-graph with vertex classes $V_1, \dots, V_k$ each of size~$n$, such that for each~$i \in [k]$ we have $\delta_{[k]\setminus\{i\}}(H) \geq a_i$. Write $q := \sum_{i\in [k]} a_i$ and fix a real number~$\mu \geq 0$. If $n \geq 5 (1+\mu)q$ and~$H$ admits a dominating set~$D$ of size at most~$(1+\mu)q$, then at least~$(1-2\mu)q$ vertices~$x \in D$ have $\deg_{H}(x) \geq n^{k-1}/2$.
\end{proposition}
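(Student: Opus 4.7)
The plan is to bound the number of low-degree vertices in $D$ by a careful double count. For $x \in D_i := D \cap V_i$, define the \emph{restricted degree} $\deg'(x)$ to be the number of crossing $(k-1)$-tuples $f \subseteq V(H) \setminus D$ avoiding $V_i$ for which $f \cup \{x\} \in E(H)$. Since $\deg'(x) \leq \deg_H(x)$, it suffices to show that at least $(1-2\mu)q$ vertices $x \in D$ satisfy $\deg'(x) \geq n^{k-1}/2$. The reason for restricting to $f \cap D = \emptyset$ is the following key observation: if $f$ is a crossing $(k-1)$-tuple avoiding $V_i$ with $f \cap D = \emptyset$, then since $D$ is a dominating set, every neighbour of $f$ must lie in $D \cap V_i$, so $N_H(f) \subseteq D_i$.

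Fix $i$ and combine this observation with the codegree lower bound $|N_H(f)| \geq a_i$ via a double count to obtain
\[
\sum_{x \in D_i} \deg'(x) = \sum_{\substack{f \cap D = \emptyset \\ f \text{ crossing, avoids } V_i}} |N_H(f)| \geq a_i \prod_{j \neq i}(n - |D_j|).
\]
Write $P_i := \prod_{j \neq i}(1 - |D_j|/n)$ and partition $D_i = L_i' \cup S_i'$, where $L_i' := \{x \in D_i : \deg'(x) \geq n^{k-1}/2\}$. The trivial upper bound $\deg'(x) \leq P_i n^{k-1}$ (there are only $P_i n^{k-1}$ admissible tuples $f$) combined with $\deg'(x) < n^{k-1}/2$ for $x \in S_i'$ converts the previous inequality into $|L_i'| P_i + |S_i'|/2 \geq a_i P_i$, i.e.\ $|L_i'| \geq a_i - |S_i'|/(2P_i)$. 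The standard inequality $\prod_j(1 - x_j) \geq 1 - \sum_j x_j$ together with the hypothesis $|D|/n \leq (1+\mu)q/n \leq 1/5$ gives $P_i \geq 1 - |D|/n \geq 4/5$ for every $i$.

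Summing over $i$, and writing $L' := \bigcup_i L_i'$, $S' := D \setminus L'$ and $P := \min_i P_i \geq 4/5$, we deduce
\[
|L'| \geq q - \sum_i \frac{|S_i'|}{2 P_i} \geq q - \frac{|S'|}{2P} \geq q - \frac{5|S'|}{8}.
\]
Substituting $|S'| = |D| - |L'| \leq (1+\mu)q - |L'|$ and rearranging yields $(3/8)|L'| \geq (3-5\mu)q/8$, hence $|L'| \geq (1 - 5\mu/3)q \geq (1-2\mu)q$ (since $5/3 \leq 2$ and $\mu \geq 0$). As $L' \subseteq \{x \in D : \deg_H(x) \geq n^{k-1}/2\}$, this completes the proof. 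The main obstacle is identifying the correct quantity to count: restricting to tuples disjoint from $D$ is essential, as it forces $N_H(f) \subseteq D_i$ and thereby localises the lower-bound contribution to vertices of $D_i$; without this restriction, the neighbours of $f$ could lie outside $D$ and no useful lower bound on $\sum_{x \in D_i} \deg_H(x)$ would follow from the dominating-set property.
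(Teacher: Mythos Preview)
Your proof is correct and follows essentially the same approach as the paper: both arguments double-count pairs $(x,f)$ with $x \in D$, $f$ a crossing $(k-1)$-tuple disjoint from $D$, and $\{x\} \cup f \in E(H)$, exploiting the dominating-set property to force $N_H(f) \subseteq D$. The only cosmetic difference is that the paper picks, for each $i$, a set $F_i$ of exactly $\beta n^{k-1}$ admissible tuples (with $3/4 \leq \beta \leq 4/5$) to make the counts uniform across $i$, whereas you use all $\prod_{j\neq i}(n-|D_j|)$ admissible tuples and carry the class-dependent factor $P_i$ through before applying the uniform bound $P_i \geq 4/5$; the resulting inequalities and final constant are the same.
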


\begin{proof}
For~$q=0$ the statement is vacuous, so we may assume that~$q\geq 1$ and so~$n \geq 5$. Let $3/4 \leq \beta \leq 4/5$ be such that~$\beta n^{k-1}$ is an integer. For each $i \in [k]$ let~$F_i$ be a set of~$\beta n^{k-1}$ crossing $(k-1)$-tuples, each of which does not intersect~$D \cup V_i$. This is possible since there are~$n^{k-1}$ crossing $(k-1)$-tuples which avoid~$V_i$, and at most $|D| n^{k-2} \leq n^{k-1}/5$ of these intersect~$D$. Write $F = \bigcup_{i \in [k]} F_i$; we now double-count the set $X := \{(x, f): x \in D, f \in F, \{x\} \cup f \in E(H)\}$. First note that for each~$i \in [k]$, each~$f \in F_i$ has at least~$a_i$ neighbours in~$V_i$, all of which must lie in~$D$ since~$D$ is a dominating set which does not intersect~$f$. So 
$$|X| \geq \sum_{i \in [k]} a_i |F_i| = \sum_{i \in [k]} a_i \cdot \beta n^{k-1} = \beta qn^{k-1}.$$ Now let~$A \subseteq D$ consist of all vertices~$x \in D$ which lie in at least~$n^{k-1}/2$ pairs in~$X$. Since every vertex of~$D$ can be in at most~$\beta n^{k-1}$ pairs in~$X$, we then have 
\begin{align*} 
|X| & < \left(|D|-|A|\right) \cdot \frac{n^{k-1}}{2} + |A| \cdot \beta n^{k-1} = \frac{|D|n^{k-1}}{2} + (\beta - 1/2)|A|n^{k-1}  \\ &\leq \frac{(1+\mu)qn^{k-1}}{2} + (\beta - 1/2)|A|n^{k-1}.
\end{align*}
Combining the two displayed inequalities and noting that~$1/(2\beta -1) \leq 2$, we obtain~$|A| \geq (1-2\mu)q$, and the conclusion follows.
\end{proof}

The final proposition we need concerns perfect matchings in bipartite graphs. Let~$G$ be a bipartite graph with vertex classes~$A$ and~$B$, where every vertex in~$A$ has degree close to~$|A|$ (we don't control the size of~$B$ at all beyond this, or place any degree condition on vertices in~$B$). The idea expressed in this proposition is that either the neighbourhoods of vertices in~$A$ do not mostly overlap extensively, in which case there should be a matching in~$G$ of size~$|A|$ even after the deletion of any small set of vertices of~$B$, or they do mostly overlap extensively, in which case we can remove a few vertices of~$A$ to obtain a subset~$X \subseteq A$ for which some usefully-sized subset~$B' \subseteq B$ is `highly-matchable' to~$X$, meaning that every subset~$A' \subseteq X$ of size~$|B'|$ can be perfectly matched to~$B'$.

\begin{proposition} \label{prop:bip_matching}
Fix a positive integer~$q$ and a real number~$\mu$ with $0 \leq \mu \leq 1/50$. Let~$G$ be a bipartite graph with vertex classes~$A$ and~$B$, where~$|A| \leq (1+\mu)q$, and suppose that $\deg(x) \geq (1-\mu)q$ for every~$x \in A$. We then must have either
\begin{enumerate}[(i)]
    \item\label{prop:bip_matching_i} for every set~$Y \subseteq B$ with~$|Y| \leq \mu q$ there exists a matching of size~$|A|$ in~$G[A \cup (B \sm Y)]$, or
    \item\label{prop:bip_matching_ii} there exist subsets~$X \subseteq A$ and~$B' \subseteq B$ with~$|X| \geq (1-2\mu)q$ and~$|B'| = \lfloor q/2 \rfloor$ such that for every subset~$A' \subseteq X$ of size~$\lfloor q/2 \rfloor$ there is a perfect matching in~$G[A' \cup B']$.
\end{enumerate}
\end{proposition}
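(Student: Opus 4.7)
The plan is a Hall-type dichotomy: if (i) fails, then I would use the witnessing subset of $A$ as both the set $X$ in (ii) and the base from which to extract $B'$. Assume (i) fails, so there exist $Y \subseteq B$ with $|Y| \leq \mu q$ and (by Hall's theorem applied to $G[A \cup (B \setminus Y)]$) a set $S \subseteq A$ with $|N_G(S) \setminus Y| < |S|$. This gives $|N_G(S)| \leq |S| + \mu q - 1$, which combined with $|N_G(S)| \geq \deg_G(x) \geq (1-\mu)q$ for any $x \in S$ yields $|S| \geq (1-2\mu)q + 1$. I would set $X := S$, which already ensures the size lower bound required in (ii); writing $N := N_G(X)$ and $D := |N| - (1-\mu)q$, one also has $0 \leq D \leq 3\mu q - 1$.

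For the choice of $B'$ I would use an averaging procedure. Writing $d(b) := |N_G(b) \cap X|$ for $b \in N$, let $B'$ be the $s := \lfloor q/2 \rfloor$ vertices of $N$ with the smallest deficiencies $|X| - d(b)$. Double-counting edges between $X$ and $N$ gives $\sum_{b \in N} (|X| - d(b)) \leq |X| \cdot D = O(\mu q^2)$, while $|N| - s \geq (\tfrac12 - \mu)q = \Omega(q)$. Together these imply that the largest deficiency among vertices of $B'$ is $O(\mu q)$, so every $b \in B'$ satisfies $d(b) \geq |X| - O(\mu q)$.

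To verify that $X$ and $B'$ witness (ii), fix an arbitrary $A' \subseteq X$ with $|A'| = s$. Each $b \in B'$ has $|N_G(b) \cap A'| \geq s - (|X| - d(b)) \geq s - O(\mu q)$, while each $x \in A'$ (whose neighbours lie entirely in $N$) has $|N_G(x) \cap B'| \geq \deg_G(x) - (|N| - s) \geq s - D \geq s - O(\mu q)$. Applying Fact~\ref{thm_for_graphs} to the bipartite graph $G[A' \cup B']$ then produces a matching of size $\min\{s, 2s - O(\mu q)\} = s$, where the hypothesis $\mu \leq 1/50$ ensures that the $O(\mu q)$ error term is at most $s$. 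I expect the delicate averaging step -- ensuring that \emph{every} vertex of $B'$, not merely most vertices of $N$, has large $d(b)$ -- to be the main obstacle, and this is what forces the particular constant $1/50$. The trivial regime $\mu q < 1$, in which $Y$ must be empty and Hall's condition on $G$ itself follows directly from the degree hypothesis, can be dispatched separately at the outset.
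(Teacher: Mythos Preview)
Your approach is essentially the same as the paper's: assume (i) fails, take the Hall-violating set as $X$, observe $N_G(X)$ has size between $(1-\mu)q$ and $(1+2\mu)q$ so the total non-edge count between $X$ and $N_G(X)$ is $O(\mu q^2)$, extract $B'$ from $N_G(X)$ by an averaging/Markov step, and conclude with the folklore bipartite degree fact. The only cosmetic difference is that the paper picks $B'$ by discarding the at most $20\mu q$ vertices of $N_G(X)$ lying in more than $q/5$ non-edge pairs, whereas you pick the $\lfloor q/2\rfloor$ vertices of smallest deficiency; both are the same Markov bound phrased two ways, and both need a separate check for very small $q$.
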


\begin{proof}
    We may assume that there exists~$Y \subseteq B$ with~$|Y| \leq \mu q$ for which there does not exist a matching of size~$|A|$ in~$G[A \cup (B \sm Y)]$, as otherwise we obtain (\ref{prop:bip_matching_i}). By Hall's Marriage Theorem it follows that some non-empty~$X \subseteq A$ has~$|N_X \sm Y| < |X|$, where we write~$N_X := N_G(X)$. 
    So $(1-\mu)q \leq |N_X| < |X| + |Y| \leq |A|+|Y| \leq (1+2\mu)q$, where the first inequality holds since each vertex~$x \in A$ has $\deg(x) \geq (1-\mu)q$. This in turn implies that $(1-2\mu)q \leq (1-\mu) q - |Y| < |X| \leq |A| \leq (1+\mu)q$.

    For each~$x \in X$, since~$|N_X| \leq (1+2\mu)q$ and~$\deg(x) \geq (1-\mu)q$, Observe that for each~$x \in X$ there are at most~$3\mu q$ vertices~$y \in N_X$ for which~$xy$ is not an edge. So the number of pairs~$xy$ with~$x \in X$ and~$y \in N_X$ for which~$xy$ is not an edge is at most $3\mu q|X|\leq 4\mu q^2$. It follows that at most~$20\mu q$ vertices of~$N_X$ lie in more than~$q/5$ such pairs. So we may choose a set~$B' \subseteq N_X \subseteq B$ of size~$|B'| = \lfloor q/2 \rfloor$ not including any such vertex, meaning that every~$y \in B'$ has at least~$|X| - q/5$ neighbours in~$X$.

    Now consider any set~$A' \subseteq X$ of size~$\lfloor q/2 \rfloor$. Our choice of~$B'$ ensures that each vertex of~$B'$ has at least $|A'| - q/5 \geq |A'|/2$ neighbours in~$A'$, whilst each vertex~$x \in A'$ has at least $|B'| - 3\mu q \geq |B'|/2$ neighbours in~$B'$ (the calculations of this sentence actually only hold for~$q \geq 4$, but the proposition is elementary to check for smaller~$q$). It follows that~$G[A' \cup B']$ admits a perfect matching, so we have (\ref{prop:bip_matching_ii}).
\end{proof}

Finally we arrive at the main result of this section, which is the key step in the proof of Theorem~\ref{thm_extremal_n-1}.

\begin{lemma} \label{lem:perfect_rainbow_matching}
Let $V_1, V_2, \dots, V_k$ be disjoint sets of vertices each of size~$n$, and let $V = \bigcup_{i \in [k]} V_i$. Let $H_1, \dots, H_t$ be $k$-partite $k$-graphs each with vertex classes $V_1, \dots, V_k$, and suppose that
\begin{enumerate}[(i)]
    \item \label{lem:perfect_rainbow_matching_i} for each~$i \in [k]$ and~$j \in [t]$ we have $\delta_{[k]\setminus\{i\}}(H_j) \geq a_i$, and
    \item \label{lem:perfect_rainbow_matching_ii} each crossing $k$-tuple appears in at least~$m$ of the $k$-graphs~$H_j$.
\end{enumerate}
Also write $q := \sum_{i \in [k]} a_i$. If $n \geq \max(1600k^4q, 100k^6)$ and $m+q \leq t \leq (1+1/200k)q$, then $\mathcal{H} :=\{H_1, \dots, H_t\}$ admits a rainbow matching of size~$m+q$.
\end{lemma}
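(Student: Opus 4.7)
The plan is to split on the value of $t$. If $t \geq m+q+k-1$, then the hypotheses of Lemma~\ref{lem:almostperfectrainbowmatching} hold comfortably (the bound $n \geq 1600 k^4 q$ easily exceeds $m+q+k-1$), and applying it with $C = \emptyset$ produces a rainbow matching of size $m+q$. Thus the genuinely new case is $m+q \leq t \leq m+q+k-2$, in which the $k-1$ spare colours required by the augmenting argument of Lemma~\ref{lem:almostperfectrainbowmatching} are unavailable. Note also that in this regime $t \leq (1+1/(200k))q$ forces $m \leq q/(200k)$, so the multiplicity parameter is comparatively tiny.

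For this tight range set $\eps := 1/(200k)$, so that $t \leq (1+\eps)q$ and $n \geq 8k^3q/\eps$. Apply Lemma~\ref{lem:rainbow_or_domset}. In conclusion~(i) the family $\mathcal{H}$ admits a perfect rainbow matching of size $t \geq m+q$, and any $(m+q)$-subset of its edges gives the required rainbow matching. Otherwise conclusion~(ii) yields a set $C \subseteq [t]$ with $|C| \geq (1-\eps)q$ and, for each $j \in C$, a dominating set $D_j$ of $H_j$ of size at most $(1+2k\eps)q = (1+1/100)q$. Apply Proposition~\ref{prop:domset_to_degree} with $\mu = 1/100$ (valid since $n \geq 5(1+\mu)q$) to obtain, for each such $j$, a set $A_j \subseteq D_j$ of size at least $0.98 q$ on which every vertex has $H_j$-degree at least $n^{k-1}/2$.

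Form the bipartite auxiliary graph $G$ between $C$ and $V$ with $(j,v)$ an edge iff $v \in A_j$. Since $|C| \leq 1.01 q$ and $\deg_G(j) \geq 0.98 q$ for $j \in C$, Proposition~\ref{prop:bip_matching} applies with $\mu = 1/50$. In its conclusion~(i) proceed in three stages: (a) greedily select a disjoint edge of $H_j$ for each of the $m+q-|C| \leq q/(100k)$ colours used from $[t] \setminus C$ (accumulating a vertex set $Y$ of size at most $q/100 \leq q/50 = \mu q$); (b) apply Proposition~\ref{prop:bip_matching}(i) with this $Y$ to obtain a system of distinct representatives $v_j \in A_j \setminus Y$ for every $j \in C$; (c) greedily complete each pair $(j,v_j)$ to a full edge of $H_j$, which is possible because $\deg_{H_j}(v_j) \geq n^{k-1}/2$ vastly exceeds the $O(kq) \cdot n^{k-2}$ forbidden $(k-1)$-tuples containing an already-used vertex.

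The main obstacle is conclusion~(ii) of Proposition~\ref{prop:bip_matching}, which yields only an ``absorbing'' set $B' \subseteq V$ of size $\lfloor q/2 \rfloor$ together with $X \subseteq C$ of size at least $0.96 q$ such that any $\lfloor q/2 \rfloor$-subset of $X$ is perfectly matchable to $B'$ in $G$; in particular no unconditional system of distinct representatives for $X$ is provided, and the neighbourhood of $X$ in $G$ may be barely larger than $X$ itself. The plan here is to combine the flexibility of $B'$ with the high-degree property of $A_j$-vertices. First, pick edges for the $\leq 0.05 q$ colours in $[t] \setminus X$ using the degree condition on each $H_j$, keeping the chosen vertex set small and disjoint from $B'$ wherever convenient. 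Next, choose distinct anchors $v_j \in A_j \setminus B'$ for $|X| - \lfloor q/2 \rfloor$ colours in $X$ using the fact that each $A_j \setminus B'$ has size at least $0.48 q$, together with the extremal structure of $G$ revealed by the proof of Proposition~\ref{prop:bip_matching}(ii) (this Hall-type argument is the subtlest point, since $|X|$ can be almost $|N_G(X)|$). Finally, apply the universal matchability to dispatch the remaining $\lfloor q/2 \rfloor$ colours of $X$ to $B'$ as their anchors. In each stage, greedy completion of an anchor to a full edge of $H_j$ succeeds because the $O(kq)$ forbidden vertices leave at least $n^{k-1}/2 - O(kq) \cdot n^{k-2} > 0$ valid completions.
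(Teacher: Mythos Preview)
Your overall architecture (Lemma~\ref{lem:rainbow_or_domset} $\to$ Proposition~\ref{prop:domset_to_degree} $\to$ Proposition~\ref{prop:bip_matching}, then greedy completion from high-degree anchors) matches the paper, and your treatment of outcome~(\ref{prop:bip_matching_i}) of Proposition~\ref{prop:bip_matching} is essentially the paper's argument. The genuine gap is in your handling of outcome~(\ref{prop:bip_matching_ii}).

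Your plan there is to find distinct anchors $v_j \in A_j \setminus B'$ for $|X|-\lfloor q/2\rfloor$ colours of $X$, reserving $B'$ for the last $\lfloor q/2\rfloor$ colours. But this can be flatly impossible. Take $m=0$, $t=q$, and all $H_j$ sharing the same set $S$ of $\lfloor 0.98q\rfloor$ high-degree vertices; then $C=[t]$, every $A_j=S$, and in Proposition~\ref{prop:bip_matching} one obtains $X=[t]$ with $N_G(X)=S$. Now $|N_G(X)\setminus B'| = |S|-\lfloor q/2\rfloor < q-\lfloor q/2\rfloor = |X|-\lfloor q/2\rfloor$, so there are simply not enough vertices outside $B'$ to serve as distinct anchors, regardless of which subset of $X$ you choose. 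The ``extremal structure revealed by the proof of Proposition~\ref{prop:bip_matching}(ii)'' gives you $|N_X|\in[(1-\mu)q,(1+2\mu)q]$ and that each $j\in X$ misses at most $3\mu q$ vertices of $N_X$, but none of this prevents $|X|>|N_X|$, which is exactly what kills the Hall-type argument you allude to.

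The paper's resolution is quite different and is the main idea you are missing: rather than trying to assign anchors directly, it \emph{deletes} the absorbing set $B'$ from every vertex class. This drops the degree sum from $q$ to $q'=q-\lfloor q/2\rfloor$, so the original hypothesis $t\geq m+q$ becomes $t\geq m+q'+\lfloor q/2\rfloor\geq m+q'+(k-1)$, which is precisely the slack Lemma~\ref{lem:almostperfectrainbowmatching} needs. That lemma (applied with $C=[t]\setminus X$) then produces a rainbow matching of size $m+q-\lfloor q/2\rfloor$ avoiding $B'$ and using every colour outside $X$, after which the universal matchability of $B'$ to any $\lfloor q/2\rfloor$-subset of $X$ supplies anchors for the remaining colours. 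In other words, the augmenting argument inside Lemma~\ref{lem:almostperfectrainbowmatching} does the work that your direct anchor-assignment cannot. Note also that this step requires $\lfloor q/2\rfloor\geq k-1$; the paper handles the residual case $q<2k-2$ (where $t=q$, $m=0$) by a separate short greedy argument using small dominating sets, which your proposal does not address either.
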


\begin{proof}
Suppose first that $\lfloor{q/2}\rfloor \geq k-1$ (we address the other, simpler, case later). Set $\eta := 1/200k$, so $t \leq (1+\eta)q$ and $n \geq 8k^3q/\eta$. These bounds permit us to apply Lemma~\ref{lem:rainbow_or_domset}, by which we may assume that there exists a set~$A \subseteq [t]$ of size at least~$(1-\eta)q$ such that for each~$j \in A$ the graph~$H_j$ admits a dominating set of size at most~$(1+2k\eta)q$, and by Proposition~\ref{prop:domset_to_degree} it follows that for each~$j \in A$ there exists a set~$X_j \subseteq V$ of size $|X_j| \geq (1-4k\eta)q$ such that each vertex~$x \in X_j$ has $\deg_{H_j}(x) \geq n^{k-1}/2$. Form an auxiliary bipartite graph~$G$ whose vertex classes are~$A$ and~$V$ in which~$jx$ is an edge if $\deg_{H_j}(x) \geq n^{k-1}/2$. So our prior observation implies that $\deg_G(j) \geq (1-4k\eta)q$ for each~$j \in A$. By Proposition~\ref{prop:bip_matching}, taking $\mu := 4k\eta = 1/50$ and $s:= \lfloor q/2 \rfloor$, we find that either
\begin{enumerate}[(a)]
    \item~\label{firstoutcome} for every set~$Y \subseteq V$ with~$|Y| \leq \mu q$ there exists a matching of size~$|A|$ in~$G[A \cup (V \sm Y)]$, or
    \item~\label{secondoutcome} there exist subsets~$X \subseteq A$ and~$Z \subseteq V$ with~$|X| \geq (1-2\mu)q$ and~$|Z| = s$ such that for any~$A' \subseteq X$ of size~$s$ there is a perfect matching in~$G[A' \cup Z]$.
\end{enumerate}

If~(\ref{firstoutcome}) holds, then arbitrarily choose a perfect rainbow matching~$M$ in the family~$\{H_j : j \in [t] \sm A\}$. This is possible since $|[t] \sm A| \leq 2\eta q \leq q/k \leq \max_{i \in [k]} a_i$, so we may form~$M$ by choosing~$t - |A|$ pairwise-disjoint crossing~$(k-1)$-tuples which avoid~$V_{i'}$, where~$i'$ is such that~$a_{i'} :=\max_{i \in [k]} a_i$, and greedily extending each to an edge of the respective~$H_j$ so that no vertex is used more than once. Let~$Y := V(M)$, so $|Y| = k|M| \leq 2k\eta q \leq \mu q$. It follows by~(\ref{firstoutcome}) that~$G[A \cup (V \sm Y)]$ admits a matching~$M_G$ which covers the vertices of~$A$, which we will shortly use to greedily extend~$M$ to a perfect rainbow matching in~$\mathcal{H}$. 

If instead~(\ref{secondoutcome}) holds, then set $V_i' := V_i \setminus Z$ for each $i \in [k]$, $V' := V \setminus Z$, $n' := n - s$, $q' = q - s$ and $H'_j := H_j[V']$ for each $j \in [t]$. Then $H'_1, \dots, H'_t$ are $k$-partite $k$-graphs with common vertex classes $V'_1, \dots, V'_k$ each of size at least~$n'$ such that each crossing $k$-tuple appears in at least~$m$ of the graphs $H'_1, \dots, H'_t$. Moreover, for each $i \in [k]$ and $j \in [t]$ we have $\delta_{[k]\setminus\{i\}}(H_j) \geq a'_i := a_i - |Z \cap V_i|$, and $\sum_{i \in [k]} a'_i = \sum_{i \in [k]} a_i - |Z| = q'$. Since $t\geq m+q =m+q'+s \geq m+q'+k-1$, we may apply Lemma~\ref{lem:almostperfectrainbowmatching} with $[t] \sm X$ in place of~$C$ to obtain a rainbow matching~$M$ of size $m+q' = m+q-s$ in the family $\{H'_1, \dots, H'_t\}$ such that~$M$ includes an edge of colour~$j$ for every $j \in [t] \sm X$ (i.e.~$M$ includes distinct edges for each such~$j$). Let~$U \subseteq [t]$ be the set of all~$j \in [t]$ for which~$M$ has no edge of colour~$j$, 
so~$U \subseteq X$ and furthermore $|U| = t-(m+q-s) \geq s$. So we may arbitrarily choose a subset~$A' \subseteq U \subseteq X$ of size~$s$, whereupon~(\ref{secondoutcome}) implies that there is a perfect matching~$M_G$ in~$G[A' \cup Z]$, which we will use to extend~$M$ to a rainbow matching in~$G$ of size~$m+q$.

In both cases we now use the matching~$M_G$ to greedily extend~$M$ to a rainbow matching of size~$|M| + |M_G|$ in~$\mathcal{H}$. Observe that each edge~$e \in M_G$ has the form~$j_ex_e$ for some~$j_e \in A$ and~$x_e \in V$. For each edge~$e \in M_G$ in turn we choose an edge~$f_e$ of~$H_{j_e}$ with~$x_e \in f_e$ such that~$f_e$ does not intersect~$V(M)$ or any previously-chosen edge~$f_{e'}$ and also~$f_e$ does not contain any vertex~$x_{e'}$ for~$e' \neq e$. Observe for this that, since~$e$ is an edge of~$G$, there are at least $\deg_{H_{j_e}}(x_e) \geq n^{k-1}/2$ edges of~$H_{j_e}$ containing~$x_e$, and the number of these edges which we are forbidden from using is at most $(|V(M)| + k|M_G|)n^{k-2} \leq ktn^{k-2}$, so it is always possible to choose an edge~$f_e$ with the required properties. The chosen edges~$f_e$ then form a perfect rainbow matching~$M'$ of size~$|M_G|$ in the family~$\{H_{j_e} : e \in M_G\}$, and so~$M^* := M \cup M'$ is a rainbow matching in~$\mathcal{H}$ of size~$|M| + |M_G|$. It follows that~$M^*$ is perfect if~(\ref{firstoutcome}) holds, and has size~$m+q$ if~(\ref{secondoutcome}) holds; either way this completes the proof.

It remains to consider the case when~$\lfloor{q/2}\rfloor < k-1$, in which case we have~$q < 2k$, so our assumption that $m+q \leq t \leq (1+1/200k)q$ implies that~$m=0$ and~$t = q$; in particular, our goal in this case is to show that~$\mathcal{H}$ admits a perfect rainbow matching. Set~$\eps = 1/3kq$ and observe that we have $n \geq 100k^6 \geq 25 k^4q^2 \geq 8k^3q/\eps$, so again we may apply Lemma~\ref{lem:rainbow_or_domset}, by which we may assume that there exists a set~$A \subseteq [t]$ of size~$q=t$ such that for each~$j \in A$ the graph~$H_j$ admits a dominating set of size at most~$q$. In other words, for every~$j \in [t]$ there exists a dominating set~$S_j$ in~$H_j$ of size at most~$q$. Write $X := \bigcup_{j \in [t]} S_j$, so $|X| \leq qt = q^2$, and define~$X_i := X \cap V_i$ for each~$i \in [k]$. We now build our desired matching greedily; for this we set~$M_0 := \emptyset$, and for each~$j \in [t]$ in turn will add an edge to~$M_{j-1}$ to form a perfect rainbow matching~$M_j$ in~$\{H_1, \dots, H_j\}$, whilst maintaining the property that every edge in~$M_j$ has at most one vertex in~$X$. So fix some~$j \in [t]$, and note that since each edge in~$M_{j-1}$ has at most one vertex in~$X$, we have $\sum_{i \in [k]} a_i = q = t > j-1 = |V(M_{j-1}) \cap X| = \sum_{i \in [k]} |V(M_{j-1}) \cap X_i|$, and so there exists some~$i \in [k]$ with $|V(M_{j-1}) \cap X_i| < a_i$. Choose a crossing $(k-1)$-tuple~$f_j$ of vertices in $V \sm (V(M_{j-1}) \cup X)$ which avoids~$V_i$ (this is possible since $n > kq + q^2 \geq |V(M_{j-1}) \cup X|$), and observe that~$f$ has at least~$a_i$ neighbours in~$V_i$, all of which must lie in~$S_j \subseteq X$, so there is at least one vertex~$u_j \in X_i \sm V(M_{j-1})$ for which~$e_j = f_j \cup \{u_j\}$ is an edge of~$H_j$. Set~$M_j := M_{j-1} \cup \{e_j\}$, and observe that~$M_j$ is a perfect rainbow matching in $\{H_1, \dots, H_j\}$ in which every edge has at most one vertex in~$X$. Having done this for every~$j \in [t]$, the final matching~$M_t$ is the desired perfect rainbow matching in~$\mathcal{H}$.
\end{proof}

\section{Proof of Theorem~\ref{thm_extremal_n-1}} \label{sec:proof}

We begin our proof of Theorem~\ref{thm_extremal_n-1} with two propositions regarding the link graphs of vertices in the first vertex class~$V_1$. The first states that for large sets~$U \subseteq V_1$, every crossing $(k-1)$-tuple avoiding~$V_1$ must appear in many of the link graphs~$\{L_v(H)\}_{v \in U}$. 

\begin{proposition}\label{prop_complete}
Let~$H$ be a $k$-partite $k$-graph whose vertex classes $V_1, \dots, V_k$ each have size~$n$, with the property that $\delta_{[k]\setminus\{1\}}(H)\geq a_1$. For every set~$U \subseteq V_1$ and each crossing $(k-1)$-tuple~$f$ avoiding~$V_1$, there are at least~$|U|-(n-a_1)$ vertices~$v \in U$ for which~$f \in L_v(H)$.
\end{proposition}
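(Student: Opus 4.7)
The plan is essentially a one-line unwrapping of definitions combined with the minimum multipartite codegree hypothesis. First I would apply the assumption $\delta_{[k]\setminus\{1\}}(H) \geq a_1$ directly to the crossing $(k-1)$-tuple $f$, which is permitted precisely because $f$ avoids $V_1$. This yields $\deg_H(f) = |N_H(f)| \geq a_1$, where $N_H(f) \subseteq V_1$ is the set of vertices $v \in V_1$ with $\{v\} \cup f \in E(H)$.

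Next I would translate this back into the language of link graphs: by definition of $L_v(H)$, we have $f \in L_v(H)$ if and only if $\{v\} \cup f \in E(H)$, i.e.\ if and only if $v \in N_H(f)$. Hence the set of vertices $v \in V_1$ for which $f \notin L_v(H)$ is exactly $V_1 \setminus N_H(f)$, which has size at most $n - a_1$.

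Finally, since $U \subseteq V_1$, the number of vertices $v \in U$ with $f \notin L_v(H)$ is likewise at most $n - a_1$, so at least $|U| - (n - a_1)$ vertices $v \in U$ satisfy $f \in L_v(H)$. There is no real obstacle here; the only thing to verify carefully is that the hypothesis $\delta_{[k]\setminus\{1\}}(H) \geq a_1$ is indeed the statement that every crossing $(k-1)$-tuple avoiding $V_1$ has at least $a_1$ neighbours in $V_1$, which is exactly the quantity needed.
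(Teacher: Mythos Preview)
Your proposal is correct and essentially identical to the paper's proof: both apply the codegree hypothesis to get $|N_H(f)| \geq a_1$ and then count to obtain at least $|U|-(n-a_1)$ vertices $v\in U$ with $f\in L_v(H)$. The only cosmetic difference is that the paper bounds the number of neighbours of $f$ outside $U$ by $n-|U|$, whereas you bound the number of non-neighbours in $V_1$ by $n-a_1$; these are the same inequality rearranged.
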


\begin{proof}
Observe that~$f$ has at least~$a_1$ neighbours in~$V_1$, of which at most~$n-|U|$ are outside~$U$, meaning that~$f$ has at least~$a_1 - (n-|U|)$ neighbours~$v \in U$, and~$f$ appears in~$L_v(H)$ for each such~$v$.
\end{proof}

Our second proposition asserts that for any matching~$M$ in the complete $(k-1)$-partite $(k-1)$-graph on $V_2, \ldots, V_k$, there is a large subset~$U \subseteq V_1$ such that~$L_v(H)$ contains many edges of~$M$ for every~$v \in U$. In fact we give two forms of this statement, varying in how we interpret `large' and `many'. The first version is a looser result which holds for a wider range of values of~$q$, which will be sufficient for the proof of Theorem~\ref{thm_extremal_n-1} in the case where~$q \geq 400 k^2$. For smaller values of~$q$ we instead use the second version of the statement, which gives a more precise result but which is only valid for~$q \leq \sqrt{\frac{n}{k+1}}$ (we will in fact only use it in the case where~$q < 400 k^2$).

\begin{proposition}\label{lemma_extremal_count}
For each $k \geq 3$ and every $\eps \in (0,1)$ there exists $n_0 \in \mathbb{N}$ such that the following statements hold. Let~$H$ be a $k$-partite $k$-graph with vertex classes $V_1, \ldots, V_k$ each of size~$n \geq n_0$ in which $\delta_{[k]\setminus\{i\}}(H) \geq a_i$ for each~$i \in [k]$. Suppose also that $a_1 \geq a_2 \geq \ldots \geq a_k$ and $\sum_{i \in [k]} a_i > n-k$, and write $q:=\sum_{i \in [2,k]} a_i$. Let~$M$ be a perfect matching in the complete $(k-1)$-partite $(k-1)$-graph on vertex sets $V_2, \ldots, V_k$.
\begin{enumerate}[(i)]
    \item \label{anyq} If $2k/\eps \leq q \leq \frac{\eps n}{8k}$, then there exists a subset~$U \subseteq V_1$ with~$|U| \geq n-(1+\eps)q$ such that every~$u \in U$ has~$|L_u(H) \cap M| \geq (1+\eps)kq$.
    \item  \label{smallq} If $q \leq \sqrt{\frac{n}{k+1}}$, then there exists a subset~$U \subseteq V_1$ with~$|U| \geq a_1$ such that every~$u \in U$ has~$|L_u(H) \cap M| \geq n-a_1 +(k-1)q$.
\end{enumerate}
\end{proposition}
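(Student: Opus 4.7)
The plan is a single double-counting argument that handles both parts via essentially the same inequality, differing only in the choice of threshold. The only hypothesis used (beyond $\sum_i a_i > n - k$) is the codegree condition $\delta_{[k] \setminus \{1\}}(H) \geq a_1$: applying this to each edge $e \in M$ (which is a crossing $(k-1)$-tuple avoiding~$V_1$) gives $\deg_H(e) \geq a_1$, and summing over the $n$ edges of~$M$ yields
$$\sum_{u \in V_1} |L_u(H) \cap M| \;=\; \sum_{e \in M} \deg_H(e) \;\geq\; n a_1.$$

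For each part, I set the appropriate threshold~$T$ (namely $T := (1+\eps)kq$ for~(\ref{anyq}) and $T := n - a_1 + (k-1)q$ for~(\ref{smallq})), define $U := \{u \in V_1 : |L_u(H) \cap M| \geq T\}$ and $B := V_1 \setminus U$, and bound the sum from above using $|L_u(H) \cap M| \leq n$ for $u \notin B$ and $|L_u(H) \cap M| \leq T - 1$ for $u \in B$ (by integrality). This gives the master inequality $|B|(n - T + 1) \leq n(n - a_1)$, which I combine with the bound $n - a_1 \leq k + q - 1$ (immediate from $\sum_i a_i > n - k$).

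Part~(\ref{anyq}) falls out easily: the hypothesis $q \leq \eps n/(8k)$ gives $n - T + 1 \geq (1 - \eps/4)n$, so $|B| \leq (k + q - 1)/(1 - \eps/4)$, and invoking $q \geq 2k/\eps$ to obtain $k - 1 \leq \eps q/2$ simplifies this to $|B| \leq (1 + \eps/2)q/(1 - \eps/4) \leq (1 + \eps)q$ for $\eps \in (0, 1]$. Hence $|U| \geq n - (1+\eps)q$, as required.

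The main obstacle is part~(\ref{smallq}): substituting $T = n - a_1 + (k-1)q$ in the master inequality yields only $|B| \leq n(n - a_1)/(a_1 + 1 - (k-1)q)$, which as a real-valued bound does not imply $|B| \leq n - a_1$. I circumvent this by exploiting the integrality of~$|B|$: showing $|B| \leq n - a_1$ is equivalent to showing $|B| < n - a_1 + 1$, and rearranging the master inequality in this integral form reduces the problem to the purely algebraic claim
$$\alpha^2 + (\alpha + 1)(k-1)q \;<\; n + 1, \qquad \text{where } \alpha := n - a_1 \leq k + q - 1.$$
Bounding $\alpha$ by $k + q - 1$, the left-hand side expands to $(k-1)^2 + (k-1)(k+2)q + kq^2$, whose dominant term $kq^2$ is at most $kn/(k+1)$ by the hypothesis $q^2 \leq n/(k+1)$; the remaining $O(k^2\sqrt{n})$ terms fit into the surplus $n/(k+1)$ once $n_0$ is chosen of order roughly~$k^5$. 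Taking $n_0$ large enough to handle both parts simultaneously completes the proof.
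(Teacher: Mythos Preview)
Your proof is correct and follows essentially the same double-counting argument as the paper: both bound $\sum_{u\in V_1}|L_u(H)\cap M|\ge a_1 n$ from below via the codegree condition and from above by splitting $V_1$ into high- and low-contribution vertices, then derive a contradiction (or direct bound) from the two thresholds. Your write-up is a direct version of the paper's proof by contradiction, with the integrality step in part~(\ref{smallq}) made more explicit; the only minor point left implicit is that $n-T+1=a_1+1-(k-1)q>0$ in part~(\ref{smallq}), which follows immediately from $\alpha\le k+q-1$ and $q\le\sqrt{n/(k+1)}$ for large $n$.
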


\begin{proof}
Let~$P$ be the set of pairs~$(u, f)$ with~$u \in V_1$ and~$f \in M$ such that~$\{u\} \cup f \in E(H)$. Since~$|M| = n$ and each~$f \in M$ has at least~$a_1$ neighbours in~$V_1$ we have~$|P| \geq a_1n$. So, if we suppose for a contradiction that~(\ref{anyq}) fails, then we have 
$$(n-k-q)n < a_1 n \leq |P| < (n-(1+\eps)q) \cdot n + (1+\eps)q \cdot (1+\eps)kq < (n-q)n - \eps q n + 4kq^2,$$
where the first inequality holds since $a_1 + q = \sum_{i \in [k]} a_i > n-k$, by assumption. It follows that $kn+4kq^2 > \eps qn$, from which a contradiction arises because our bounds on~$q$ give $kn \leq \eps qn/2$ and $4kq^2 \leq \eps qn/2$. We conclude that~(\ref{anyq}) holds.

Similarly, if we suppose for a contradiction that~(\ref{smallq}) fails, then we have
$$a_1n \leq |P| \leq (a_1-1)\cdot n + (n-a_1+1) \cdot (n-a_1+(k-1)q) < a_1n - n + (q+k)(kq+k),$$
where the final inequality holds since $n-a_1 < k+q$ (and each of these variables are integers). It follows that $k(q+k)(q+1) > n$, but this contradicts our assumption that $q \leq \sqrt{\frac{n}{k+1}}$ (for~$n$ sufficiently large with respect to~$k$). We conclude that~(\ref{smallq}) holds also.
\end{proof}

We can now give the proof of Theorem~\ref{thm_extremal_n-1}, which combines Propositions~\ref{prop_complete} and~\ref{lemma_extremal_count} with Lemma~\ref{lem:perfect_rainbow_matching} before finishing with an application of Hall's Theorem to an appropriate auxiliary bipartite graph to obtain a matching in~$H$ of the desired size.

\begin{proof}[Proof of Theorem~\ref{thm_extremal_n-1}]
By reducing~$a_1$ if necessary, we may assume that $\sum_{i \in [k]} a_i \leq n$, so our goal is to show that~$H$ contains a matching of size~$\sum_{i \in [k]} a_i$. If $\sum_{i \in [k]} a_i \leq n-k+2$ then we obtain a matching of this size by Fact~\ref{prop_fact}, so we may assume that $\sum_{i \in [k]} a_i \geq n-k+3$ also.  
Define $q:= \sum_{i \in [2,k]} a_i$, so $n-k+3 - a_1 \leq q \leq n-a_1$, and by assumption we have $q \leq n/1600k^4$. Also let~$K^{k-1}$ denote the complete $(k-1)$-partite $(k-1)$-graph on vertex sets $V_2, \ldots, V_k$, and arbitrarily choose a matching~$M$ of size~$n$ in~$K^{k-1}$.

Suppose first that~$q \geq 400k^2$. Applying Proposition~\ref{lemma_extremal_count} with~$\eps = 1/200k$ we obtain a subset~$X \subseteq V_1$ such that
\begin{enumerate}[(a)]
\item \label{Xpropi} $n-(1+\eps)q \leq |X|$, and
\item \label{Xpropii} $|M \cap L_u(H)| \geq (1+\eps)kq$ for every~$u \in X$.
\end{enumerate}
Since $a_1 \geq n-q-k+3 \geq n-(1+\eps)q + 1$ we may additionally insist that~$|X| \leq a_1$ (by removing vertices from~$X$ if necessary).
Let~$Y:= V_1 \setminus X$, so $n-a_1 \leq |Y| \leq (1+\eps)q$  by~(\ref{Xpropi}). Applying Proposition~\ref{prop_complete} to~$Y$ we find that each crossing $(k-1)$-tuple avoiding~$V_1$ appears in at least~$m := |Y|-(n-a_1) \geq 0$ of the link graphs~$\{L_u(H)\}_{u \in Y}$. Since~$q \leq n-a_1$ we have~$|Y| \geq m+q$, so by Lemma~\ref{lem:perfect_rainbow_matching}, applied with~$m$ and~$q$ as here and with~$t = |Y|$, the family~$\{L_u(H)\}_{u \in Y}$ admits a rainbow matching~$M_{rainbow}$ of size~$m+q \leq |Y|$.
This means there is a set~$Z \subseteq Y$ of size~$m+q$ and a bijection~$\psi: Z \to M_{rainbow}$ such that for each~$z \in Z$ we have~$\{z\} \cup \psi(z) \in E(H)$. 

Let~$M' \subseteq M$ be the matching in~$K^{k-1}$ consisting of all edges of~$M$ which do not intersect any edge of~$M_{rainbow}$, and let~$M^*$ be a matching in~$K^{k-1}$ of size~$n$ with~$M' \cup M_{rainbow} \subseteq M^*$. Since $|M \sm M'| \leq |V(M_{rainbow})|  
\leq (k-1)|Y| \leq (k-1)(1+\eps)q$, by~(\ref{Xpropii}) we have for each~$u \in X$ that 
\begin{align*}
|M^* \cap L_u(H)| &\geq |M' \cap L_u(H)| \geq |M \cap L_u(H)| - |M \setminus M'| 
\\ &\geq (1+\eps)kq - (k-1)(1+\eps)q \geq (1+\eps)q.
\end{align*}
 Set $U^* := X \cup Z \subseteq V_1$, and form an auxiliary bipartite graph~$B$ with vertex classes~$U^*$ and~$M^*$ in which, for each~$u \in U^*$ and~$f \in M^*$, we have~$uf \in E(B)$ if and only if $\{u\} \cup f \in E(H)$.

Consider a set~$S \subseteq U^*$. If~$|S| \geq n-a_1+1$, then by Proposition~\ref{prop_complete} each~$f \in K^{k-1}$ has~$|N_H(f) \cap S| \geq 1$, so~$N_B(S) = M^*$ and so~$|N_B(S)| = n \geq |S|$. 
Alternatively, if~$|S| \leq n-a_1$ and~$S$ contains a vertex~$u \in X$, then $|N_B(S)| \geq |N_B(u)| = |M^* \cap L_u(H)| \geq (1+\eps)q \geq n-a_1 \geq |S|$. 
Finally, if~$S \subseteq Z$ then~$z \psi(z) \in E(B)$ for each~$z \in S$, so again we have~$|N_B(S)| \geq |S|$. 
We conclude that the bipartite graph~$B$ satisfies Hall's criterion that~$|N_B(S)| \geq |S|$ for every~$S \subseteq U^*$, and therefore contains a matching~$M_B$ of size~$|U^*|$. 
The edges~$\{u\} \cup f$ for each~$uf \in M_B$ then form a matching in~$H$ of size 
$$|U^*| = |X| + |Z| = n-|Y| + m + q = a_1+q = \sum_{i \in [k]} a_i,$$ as required. 

Now suppose instead that~$q<400k^2$. Our argument for this case is essentially the same as for the previous case, except that we use the more precise form of Proposition~\ref{lemma_extremal_count} (since~$q$ is now small enough to permit this). 
Indeed, Proposition~\ref{lemma_extremal_count} yields a set~$X \subseteq V_1$ with~$|X| = a_1 \leq n-q$ such that for every~$u \in X$ we have $|L_u \cap M| \geq n-a_1 +(k-1)q$. Let $Z \subseteq V_1 \setminus X$ have~$|Z| = q$. 
By Lemma~\ref{lem:perfect_rainbow_matching}, applied with~$m=0$ and~$t=q$, the family~$\{L_u(H)\}_{u \in Z}$ admits a perfect rainbow matching~$M_{rainbow}$. 
Define~$M' \subseteq M$ by $M' := \{e \in M : e \cap V(M_{rainbow}) = \emptyset\}$, and let~$M^*$ be a perfect matching in~$K^{k-1}$ with~$M' \cup  M_{rainbow} \subseteq M^*$. Since $|V(M_{rainbow})| = (k-1)|M_{rainbow}|= (k-1)q$, we have for each~$u \in X$ that $|L_u(H) \cap M^*| \geq |L_u(H) \cap M'| \geq n-a_1+(k-1)q-(k-1)q = n-a_1$. Set~$U^* = X \cup Z$, and form an auxiliary bipartite graph~$B$ with vertex classes~$U^*$ and~$M^*$ in which, for each~$u \in U^*$ and~$f \in M^*$, we have~$uf \in E(B)$ if and only if~$\{u\} \cup f \in E(H)$. 
Now consider a subset~$S \subseteq U^*$. If~$|S| \geq n-a_1+1$ then~$|N_B(S)| = n \geq |S|$ by Proposition~\ref{prop_complete}; alternatively, if~$|S| \leq n-a_1$ and~$S$ contains some~$u \in X$ then $|N_B(S)| \geq |N_B(u)| \geq n-a_1 \geq |S|$; finally if~$|S| \subseteq Z$ then $|N_B(S)| \geq |N_{M_{rainbow}}(S)| =|S|$. 
We conclude that the graph~$B$ satisfies Hall's criterion, and therefore contains a matching~$M_B$ of size~$|U^*|$; we then have a matching $\{\{u\} \cup f: u \in U^*, f \in M^*, uf \in M_B\}$ in~$H$ of size~$|U^*| = |X|+|Z| = a_1+q = \sum_{i \in [k]} a_i$ as required.
\end{proof}

\section{Concluding Remarks}

The main contribution of this paper is to extend Fact~\ref{thm_for_graphs} to the $k$-graph setting, with the degree condition for graphs being replaced by a codegree condition for $k$-graphs. Following Pikhurko~\cite{pikhurko} and Lo and Markstr\"{o}m~\cite{lo-mark}, it would be interesting to investigate lower bounds on the matching number~$\nu(H)$ for $k$-partite $k$-graphs~$H$ satisfying analogous minimum multipartite $\ell$-degree conditions for~$\ell < k-1$. 

As described in Section~\ref{sec:ideas}, we see the combination of degree and multiplicity conditions appearing in Lemma~\ref{lem:perfect_rainbow_matching} as being a natural setting to consider, and expect that result should hold for a much wider range of values of~$m, q$ and~$t$; we think this question is worthy of further attention.

\section*{Acknowledgements}

We thank Alexey Pokrovskiy for helpful discussions in initial stages of the project, and for communicating Lemma~\ref{lem_alexey} to us. We also note that our proof of Theorem~\ref{thm_extremal_n-1} -- particularly the reformulation in terms of rainbow matchings -- was inspired by Aharoni and Haxell's generalisation of Hall's theorem for hypergraphs~\cite{hyphall}, though this does not appear in the final version of our arguments.

We thank the anonymous reviewers for their helpful suggestions for improvements to the manuscript.

\end{document}